\newtheorem{theorem}{Theorem}
\newtheorem{lemma}{Lemma}
\newtheorem{corollary}{Corollary}[theorem]
\newtheorem{definition}{Definition}
\newtheorem{claim}{Claim}
\newtheorem{question}{Question}
\def\beq{\begin{equation}}\def\eeq{\end{equation}}
\def\beqn{\begin{eqnarray}}\def\eeqn{\end{eqnarray}}
\def\qed{\ifhmode\unskip\nobreak\fi\quad\ifmmode\Box\else$\Box$\fi}
\newcommand{\pipe}{\,\,\vert\,\,}
\newcommand{\ep}{\varepsilon}
\newcommand{\ex}{\text{ex}}
\newcommand{\hH}{\mathcal{H}}
\newcommand{\tT}{\mathcal{T}}
\newcommand{\indeg}{d_G^-}
\newcommand{\outdeg}{d_G^+}
\newcommand{\red}{{G_r}}
\newcommand{\blue}{{G_b}}
\newenvironment{case}[1]{\smallskip\noindent{\bf Case #1.}}{\smallskip}
\newcommand{\twographs}[4]{
\begin{minipage}[b]{0.45\textwidth}
\centering
#1
\end{minipage}%
\hfill
\begin{minipage}[b]{0.45\textwidth}
#3
\end{minipage}

\begin{minipage}[t]{0.45\textwidth}
\centering
\captionof{figure}{#2}
\end{minipage}%
\hfill
\begin{minipage}[t]{0.45\textwidth}
\captionof{figure}{#4}
\end{minipage}
}
\newenvironment{myfigure}{
\begin{figure}[!h]
\centering
}{
\end{figure}
}
\newenvironment{graph}{
\begin{tikzpicture}
\tikzstyle{every node}=[draw,circle,fill=white,minimum size=4pt, inner sep=0pt]
\tikzstyle{arrow} = [draw,->,>=latex]
\tikzstyle{redarrow} = [draw,->,>=latex,red]
\tikzstyle{bluearrow} = [draw,->,>=latex,blue]
}{
\end{tikzpicture}
}
\newcommand{\circlegraph}[3] 
{
    \def \n {#1}
    \def \radius {#2}

    \foreach \s in {1,...,\n}
    {
        \node[draw] (#3 \s) at ({360/\n * (\s - 1)}:\radius) {};
    }
}
\newcommand{\fivecycle}[1]
{
    \circlegraph{5}{2cm}{#1}
    \draw (#1 1) -- (#1 2);
    \draw (#1 2) -- (#1 3);
    \draw (#1 3) -- (#1 4);
    \draw (#1 4) -- (#1 5);
    \draw (#1 5) -- (#1 1);
}
\newcommand{\sixcycle}[1]
{
    \circlegraph{6}{2cm}{#1}
    \draw (#1 1) -- (#1 2);
    \draw (#1 2) -- (#1 3);
    \draw (#1 3) -- (#1 4);
    \draw (#1 4) -- (#1 5);
    \draw (#1 5) -- (#1 6);
    \draw (#1 6) -- (#1 1);
}
\title{Good graph hunting\footnote{
This research has been conducted in the framework of the Budapest Semesters Undergraduate Research Experience Program; my advisor was Andr\'as Gy\'arf\'as.
}}
\author{Philip Garrison\\
\small Carnegie Mellon University\\[-0.8ex]
\small \texttt{philipgarrison@cmu.edu}}
\begin{document}
\maketitle
\begin{abstract}
Given graphs\footnote{We consider only finite simple graphs.} $H_1, H_2, \dots, H_k$, the Ramsey number $R(H_1, \dots, H_k)$ is the smallest integer $n$ for which in any coloring of the edges of the complete graph $K_n$ with colors $1,2,\dots,k$, there is some color $i$ with a monochromatic copy of $H_i$. 
We call a tuple $(H_1, \dots, H_k)$ {\em good} if for every $k$-coloring of the edges of an $R(H_1, \dots, H_k)$-chromatic graph, there is some color $i$ with a monochromatic copy of $H_i$.
We call a graph $H$ {\em $k$-good} if the $k$-tuple $(H, H, \dots, H)$ is good, and $H$ is {\em good} if it is $k$-good for every $k$. 
Bialostocki and Gy\'arf\'as proved that matchings are good and asked whether every acyclic $H$ is good. 
A natural strategy shows that $P_4$ is $k$-good for $k \not = 3$ and that $(P_4, P_5)$ is good.
We develop a new technique for showing that a graph is $2$-good, and we apply it successfully to $P_5$, $P_6$, and $P_7$.
\end{abstract}

\section{Introduction} 
\label{sec:intro}
Given graphs $H_1, H_2, \dots, H_k$, the Ramsey number $R(H_1, \dots, H_k)$ is the smallest integer $n$ for which in any coloring of the edges of the complete graph $K_n$ with colors $1,2,\dots,k$, there is some color $i$ with a monochromatic copy of $H_i$. 
Bialostocki and Gy\'arf\'as \cite{BGY} raised the following question:
what is the smallest $n$ such that every $n$-chromatic graph $G$ (rather than just $K_n$) has this property.
They noted that such a smallest $n$ exists only for acyclic graphs, and asked whether this $n$ is equal to $R(H_1,\dots,H_k)$ for all acyclic graphs.

\begin{definition}
We call a tuple $(H_1, \dots, H_k)$ {\em good} if for every $k$-coloring of the edges of an $R(H_1, \dots, H_k)$-chromatic graph, there is some color $i$ with a monochromatic copy of $H_i$.\footnote{
$n := R(H_1,\dots,H_k)$ is the {\em smallest} integer with this property, because there is an $n-1$-chromatic graph whose edges have a $k$-coloring such that in every color $i$, there is no monochromatic $H_i$ - namely, $K_{n-1}$.} 
When $H_1 = H_2 = \dots = H_k = H$, we write $R_k(H) = R(H_1, \dots, H_k)$.
We call a graph {\em $k$-good} if every $R_k(H)$-chromatic graph contains a monochromatic $H$ in every $k$-coloring of its edges, and $H$ is {\em good} if it is $k$-good for every $k$. 
\end{definition}

Bialostocki and Gy\'arf\'as proved that matchings are good, extending a classical result of Cockayne and Lorimer \cite{CL}.
As an introduction, we apply Brooks' theorem to show that stars are good graphs.
In Section \ref{sec:2}, we consider the Tur\'an number $\ex(n,H)$ of a graph, which is the maximum number of edges among $H$-free graphs on $n$ vertices.
From this, we get an upper-bound on the number of edges in a graph $G$ which has an edge coloring such that no color has a monochromatic $H$.
For certain graphs $H$, we use this bound to show that such a graph $G$ does not have enough edges to have chromatic number $R_k(H)$.
With this, we prove that $P_4$ is $k$-good for all $k$, with the possible exception of $k = 3$, and show that $(P_4, P_5)$ is good. 

This method is not strong enough to give results for other paths, so we turn to a more sophisticated technique in Section \ref{sec:3}.
We consider the possible families of $H$-free graphs and orient each by orienting away from those vertices with unbounded degree.
Using this technique, we prove Theorem \ref{maintheorem}, which shows that a graph $H$ is $2$-good if the $H$-free graphs can be given a suitable partial orientation.
To prove the claim ``$P_5$, $P_6$, and $P_7$ are $2$-good'' to be true, we demonstrate such partial orientations.
Finally, we conjecture that all $P_N$-free graphs have suitable partial orientations.

\begin{theorem} 
\label{stars}
Let $n \geq 3$, $k \geq 2$. 
Let $G$ be a graph with $\chi(G) \geq k(n-2) + 1$ (where $\chi(G)$ is the chromatic number of $G$).
If $G$ is not the complete graph, then every $k$-coloring of the edges of $G$ contains a monochromatic star on $n$ vertices $S_n$.
\end{theorem}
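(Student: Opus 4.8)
The plan is to reduce everything to a single pigeonhole argument at one high-degree vertex, and to manufacture that vertex using Brooks' theorem. First I would record the elementary observation that if a vertex $v$ has degree at least $k(n-2)+1$, then in any $k$-coloring of the edges incident to $v$ some color class contains at least $\lceil (k(n-2)+1)/k\rceil = (n-2)+1 = n-1$ of them (here $\lceil (n-2)+1/k\rceil = n-1$ because $0 < 1/k \le 1/2$). Those $n-1$ equally colored edges form a monochromatic $S_n = K_{1,n-1}$ centered at $v$. Thus it suffices to exhibit a single vertex of degree at least $k(n-2)+1$ in $G$.

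The heart of the argument is that the chromatic hypothesis forces such a vertex. I would pass to a connected component $G_0$ realizing $\chi(G_0) = \chi(G) \ge k(n-2)+1$ and apply Brooks' theorem to $G_0$: as long as $G_0$ is neither a complete graph nor an odd cycle, Brooks gives $\Delta(G_0) \ge \chi(G_0) \ge k(n-2)+1$, so a maximum-degree vertex of $G_0$ has degree at least $k(n-2)+1$ in $G$ as well. Combined with the first step, this settles every instance in which Brooks' bound is available.

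The main obstacle is precisely the two exceptional families in Brooks' theorem, where one obtains only $\Delta = \chi - 1 = k(n-2)$, one edge short of what the pigeonhole needs. I would dispatch them as follows. The hypothesis that $G$ is not complete removes the possibility that $G$ itself is a complete graph. For the odd-cycle exception, note that an odd cycle has chromatic number $3$, so $k(n-2)+1 \le 3$; together with $k \ge 2$ and $n \ge 3$ this forces $(k,n) = (2,3)$, whence $S_n = S_3 = P_3$. Since an odd cycle has no proper $2$-edge-coloring, every $2$-coloring of its edges repeats a color at some vertex, producing the desired $P_3$. The one genuinely delicate point is that when $G$ is disconnected the chosen component $G_0$ might itself be a complete graph $K_m$ even though $G$ is not, and at the borderline size $m = k(n-2)+1$ its maximum degree is only $k(n-2)$; here the non-completeness hypothesis must be invoked with care (in the connected setting $G = G_0$ is excluded outright, so Brooks applies to $G$ directly). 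I expect checking that this borderline case cannot actually defeat the statement to be the part requiring the most attention.
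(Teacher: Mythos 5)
Your argument is the same as the paper's: pigeonhole at a vertex of degree at least $k(n-2)+1$, manufacture that vertex via Brooks' theorem, and dispatch the odd-cycle exception by forcing $(k,n)=(2,3)$ and using the fact that an odd cycle has no proper $2$-edge-coloring. On connected graphs your proof is complete and correct, and your handling of the exceptional cases is if anything a bit more explicit than the paper's.

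The borderline case you flag at the end, however, cannot be closed, because it is a genuine counterexample to the statement as literally worded. Take $k=2$, $n=4$, so $k(n-2)+1=5$, and let $G$ be the disjoint union of $K_5$ and an isolated vertex. Then $\chi(G)=5$ and $G$ is not a complete graph, yet $K_5$ decomposes into two edge-disjoint Hamiltonian cycles; coloring one red and one blue gives every vertex monochromatic degree $2$, so there is no monochromatic $S_4$ (which requires a vertex of monochromatic degree $3$). More generally, whenever $R_k(S_n)=k(n-2)+2$ (the case $\varepsilon=2$ of Burr and Roberts, e.g.\ whenever $n$ is even), the complete graph $K_{k(n-2)+1}$ admits a $k$-coloring with no monochromatic $S_n$ by the very definition of the Ramsey number, and appending an isolated vertex produces a non-complete graph with $\chi(G)=k(n-2)+1$ defeating the theorem. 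So the statement must be read with $G$ connected; the paper's own proof tacitly assumes this, since its trichotomy ``(a) a vertex of large degree, (b) $G$ is an odd cycle, (c) $G$ is the complete graph'' is Brooks' theorem for connected graphs, and applied to a disconnected $G$ it silently omits exactly the configuration you identified. You were right that this is the delicate point, but wrong to expect it to be repairable --- it is a flaw in the statement, not in your argument. Note that the downstream corollary is unharmed: there $\chi(G)\ge R_k(S_n)$, so a complete component realizing the chromatic number either has order at least $k(n-2)+2$, giving the degree the pigeonhole needs, or has order exactly $R_k(S_n)=k(n-2)+1$ (possible only when $\varepsilon=1$), in which case a monochromatic $S_n$ exists by the definition of $R_k(S_n)$ itself.
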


\begin{proof}
By Brooks' theorem, either (a) $G$ has a vertex of degree at least $k(n-2)+1$, (b) $G$ is an odd cycle and $k(n-2)+1 = 3$, or (c) $G$ is the complete graph on $k(n-2)+1$ vertices.
By assumption, $G$ is not complete, so only (a) or (b) is possible.
\begin{enumerate}
\item[(a)] Let $v$ be a vertex of $G$ with degree at least $k(n-2)+1$.
In any $k$-coloring of the edges of $G$, $v$ must have $n-1$ edges of the same color, so $v$ is the center of a monochromatic $S_n$.

\item[(b)] Suppose $G$ is an odd cycle and $k(n-2)+1 = 3$.
Since $k \geq 2$ and $n \geq 3$ by assumption, we must have $k = 2$ and $n = 3$.
In every $2$ coloring of the edges of an odd cycle, there is a monochromatic $S_3$. \qedhere
\end{enumerate}
\end{proof}

\begin{corollary} The star on $n$ vertices, $S_n$, is $k$-good for all $k \geq 1$.
\end{corollary}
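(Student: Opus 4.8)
The plan is to derive the corollary almost directly from Theorem~\ref{stars}, the only extra ingredient being a lower bound on the multicolour star Ramsey number. Fix $n \geq 3$. First I would record that $R_k(S_n) \geq k(n-2)+1$ for every $k \geq 1$. Since $S_n = K_{1,n-1}$ has maximum degree $n-1$, a $k$-colouring of a complete graph avoids a monochromatic $S_n$ exactly when each colour class has maximum degree at most $n-2$; and one obtains such a colouring of $K_{k(n-2)}$ by taking a proper edge-colouring of $K_{k(n-2)}$ into matchings (of which there are $k(n-2)-1$ or $k(n-2)$, depending on parity) and distributing the matchings into $k$ groups of at most $n-2$ each, so that every colour has maximum degree at most $n-2$. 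Hence no $k$-colouring of $K_{k(n-2)}$ is forced to contain a monochromatic $S_n$, which gives $R_k(S_n) \geq k(n-2)+1$. (Alternatively one may cite the exact value of Burr and Roberts, which is $k(n-2)+1$ or $k(n-2)+2$; either way the needed inequality holds.)

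Now let $G$ be an $R_k(S_n)$-chromatic graph and fix an arbitrary $k$-colouring of its edges; I must produce a monochromatic $S_n$. For $k \geq 2$ I would split according to whether $G$ is complete. If $G$ is not complete, then $\chi(G) = R_k(S_n) \geq k(n-2)+1$, so the hypotheses of Theorem~\ref{stars} are met verbatim and the theorem supplies the desired monochromatic star. If instead $G$ is complete, then $\chi(G) = R_k(S_n)$ forces $G = K_{R_k(S_n)}$, and here the conclusion is immediate from the \emph{definition} of the Ramsey number $R_k(S_n)$: every $k$-colouring of the edges of $K_{R_k(S_n)}$ contains a monochromatic $S_n$.

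It remains to treat $k = 1$, where Theorem~\ref{stars} does not apply. In this case $R_1(S_n) = n$, a ``monochromatic'' copy is simply any copy of $S_n$, and a graph $G$ with $\chi(G) = n$ satisfies $\Delta(G) \geq \chi(G) - 1 = n-1$ because $\chi(G) \leq \Delta(G)+1$. A vertex of degree at least $n-1$, together with $n-1$ of its neighbours, is a copy of $S_n$, so $S_n$ is $1$-good.

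The main obstacle, such as it is, does not lie in the colouring step itself—Theorem~\ref{stars} hands us that—but in reconciling its hypothesis $\chi(G) \geq k(n-2)+1$ with the true value of $R_k(S_n)$, and in disposing of the complete case separately. Because Theorem~\ref{stars} explicitly excludes complete graphs, that case cannot be folded into the general argument and must instead be closed directly from the definition of the Ramsey number; the corollary works precisely because the threshold $k(n-2)+1$ of the theorem is exactly the lower bound that the star Ramsey number satisfies.
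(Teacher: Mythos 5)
Your proposal is correct and follows essentially the same route as the paper: you close the complete case by the definition of $R_k(S_n)$, treat $k=1$ separately via $R_1(S_n)=n$ (using the greedy bound $\chi(G)\le\Delta(G)+1$ where the paper invokes the folklore fact about acyclic graphs in $n$-chromatic graphs), and for $k\ge 2$ feed the non-complete case into Theorem~\ref{stars} using $R_k(S_n)\ge k(n-2)+1$, which the paper simply cites from Burr--Roberts while you also derive it directly and correctly by partitioning a proper edge-colouring of $K_{k(n-2)}$ into $k$ groups of at most $n-2$ matchings. The only (trivial) omission is that fixing $n\ge 3$ silently drops the case $n=2$, which the paper dispatches in one line: $S_2=K_2$, $R_k(S_2)=2$, and any $2$-chromatic graph contains an edge, which is monochromatic in every colouring.
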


\begin{proof}
Let $G$ be a graph with chromatic number $R_k(S_n)$; we will show that every $k$-coloring of the edges of $G$ has a monochromatic $S_n$.
By definition, if $G$ is the complete graph on $R_k(S_n)$ vertices, every $k$-coloring of the edges of $G$ has a monochromatic $S_n$.
For the case $k=1$, it is easy to see that $R_1(S_n) = n$.
The folklore statement that any $n$-chromatic graph has a copy of every acyclic graph on $n$ vertices shows that, in particular, $S_n$ is $1$-good, so we may assume $k > 1$.
$S_2=K_2$ is trivially good ($R_k(S_2) = 2$ for all $k$), so we may assume $n > 2$.
The $k$-color Ramsey number of $S_n$ is $k(n-2) + \ep$, where $\ep = 1$ if $n$ is odd and $k$ is even, and $\ep = 2$ otherwise \cite{BuRo}.
So $\chi(G) \geq k(n-2)+1$.\footnote{Note that in the case $\ep = 2$, this result is $1$ stronger than it needs to be.}
By Theorem \ref{stars}, every $k$-coloring of the edges of $G$ contains a monochromatic $S_n$.
\end{proof}

\section{An application of Tur\'an numbers}
\label{sec:2}
Given a graph $H$ and an integer $n$, the Tur\'an number $\ex(n,H)$ is the maximum number of edges in a graph on $n$ vertices which does not have $H$ as a subgraph.
It is standard to estimate the Ramsey numbers $R(H_1, \dots, H_k)$ using $\ex(n,H_i)$ - here we follow this tradition.
\begin{theorem}
\label{exnum}
Let $G = (V, E)$ and $H_1, \dots, H_m$ be graphs, with $n = |V|$.
Suppose that $G$ is not a complete graph or an odd cycle, and suppose 
\[ \chi(G) \geq 1+\frac{2}{n}\sum_i\ex(n, H_i) \]
Then every $k$ coloring of the edges of $G$, there is some color $i$ which contains a monochromatic $H_i$.
\end{theorem}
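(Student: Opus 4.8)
The plan is to argue by contradiction. Suppose some $k$-coloring of the edges of $G$ has no monochromatic $H_i$ in color $i$ for any $i$. Then the edges receiving color $i$ span an $H_i$-free graph on the $n$ vertices of $G$, so there are at most $\ex(n, H_i)$ of them; summing over the colors gives $|E| \le \sum_i \ex(n, H_i)$, so the average degree of $G$ is at most $\frac{2}{n}\sum_i \ex(n, H_i)$. The key observation is that this same count is available for every subgraph: if $G'$ is any subgraph of $G$ on $n'$ vertices, the restriction of the coloring still has no monochromatic $H_i$, so $|E(G')| \le \sum_i \ex(n', H_i)$, and hence $G'$ contains a vertex whose degree is at most its average degree $\frac{2}{n'}\sum_i \ex(n', H_i)$.

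First I would convert this density bound into a bound on the chromatic number, using the greedy/degeneracy coloring that underlies the standard proof of Brooks' theorem: repeatedly delete a minimum-degree vertex and color in the reverse order. At each deletion step the current subgraph inherits the Tur\'an edge bound, so it has a vertex of small back-degree, yielding $\chi(G) \le 1 + \max_{n' \le n} \frac{2}{n'}\sum_i \ex(n', H_i)$. Comparing this with the hypothesis $\chi(G) \ge 1 + \frac{2}{n}\sum_i \ex(n, H_i)$ forces $\chi(G)$ to the boundary of the bound, and it is exactly here that excluding complete graphs and odd cycles pays off: as in Theorem~\ref{stars}, Brooks' theorem shaves the final unit from the greedy estimate for every connected graph that is neither complete nor an odd cycle, producing the strict inequality $\chi(G) < 1 + \frac{2}{n}\sum_i \ex(n, H_i)$ and the desired contradiction.

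Two points need care. The step I expect to be the main obstacle is the passage from subgraphs $G'$ of size $n' < n$ back to the stated bound at size $n$: I must compare $\frac{1}{n'}\sum_i \ex(n', H_i)$ with $\frac{1}{n}\sum_i \ex(n, H_i)$, i.e. control how the per-vertex Tur\'an numbers grow with the number of vertices. This requires invoking a monotonicity property of $\ex(\cdot, H_i)$, or else replacing the greedy argument by a \emph{critical-subgraph} version: take $C$ a $\chi(G)$-critical subgraph, so $\delta(C) \ge \chi(G) - 1$ and $\frac{|V(C)|(\chi(G)-1)}{2} \le |E(C)| \le \sum_i \ex(|V(C)|, H_i)$, and then push the comparison to size $n$. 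The second, minor, point is connectivity bookkeeping: Brooks' theorem is a statement about connected graphs, so I would apply the density and coloring bounds componentwise, using that $\chi(G)$ is the maximum of the chromatic numbers of the components of $G$ and that the excluded component types are precisely the Brooks exceptions.
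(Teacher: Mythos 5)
Your route is, at heart, the contrapositive of the paper's own argument: assume a coloring with no monochromatic $H_i$, bound each color class by a Tur\'an number, pass to a minimum-degree (critical) subgraph, and invoke Brooks' theorem for one extra unit. But the step you yourself flag as the main obstacle is a genuine gap, and neither of your proposed repairs closes it. Plain monotonicity of $\ex(\cdot,H)$ gives only $\ex(n',H)\le\ex(n,H)$, which is too weak; your greedy/critical-subgraph bound needs the \emph{per-vertex} monotonicity $\ex(n',H)/n'\le\ex(n,H)/n$ for $n'\le n$, and that is false in general: for $H=P_3$ one has $\ex(m,P_3)=\lfloor m/2\rfloor$, so $\ex(2,P_3)/2=1/2>1/3=\ex(3,P_3)/3$. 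Your use of Brooks is also off in two ways. First, Brooks bounds $\chi$ by the maximum degree, not by the degeneracy, so it cannot ``shave the final unit from the greedy estimate'': two triangles sharing a vertex is connected, neither complete nor an odd cycle, is $2$-degenerate, and has $\chi=3$. Second, the critical subgraph $C$ need not inherit ``neither complete nor an odd cycle'' from $G$ (a perfect matching has $K_2$ as its critical subgraph), so Brooks may simply be unavailable on $C$.

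For comparison, the paper's proof never performs the $n'$-to-$n$ transfer you correctly identify as necessary: it declares ``we may suppose that $G$ is a minimal $N$-chromatic graph'' and then keeps counting degrees over the original $n$ vertices, using Brooks only to find, in the minimum-degree-$(N-1)$ graph, one vertex of degree at least $N$, which upgrades $2|E|\ge n(N-1)$ to $2|E|\ge n(N-1)+1$ and hence $|E|>\sum_i\ex(n,H_i)$. That reduction is thus asserted there rather than justified, and it glosses exactly the two transfer issues above. In the paper's applications the danger is invisible because the $H_i$ are paths and $\ex(n,P_k)\le\frac{k-2}{2}\,n$ is linear in $n$, so the per-vertex bound is scale-independent; for arbitrary $H_i$ the danger is real. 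Indeed, take $G=K_4$ plus an isolated vertex and $H_1=H_2=H_3=P_3$: then $\chi(G)=4\ge 1+\frac{2}{5}\cdot 6$, $G$ is neither complete nor an odd cycle, yet a proper $1$-factorization of $K_4$ makes every color class a matching, so no color contains $P_3$. So ``push the comparison to size $n$'' cannot be carried out as stated; a correct write-up must keep the count on all $n$ vertices (e.g.\ assume $G$ itself is vertex-critical, or impose a minimum-degree hypothesis, with the Brooks exceptions excluded at that level) or restrict to families where $\ex(n,H_i)/n$ is nondecreasing.
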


\begin{proof}
Let $N = 1+\frac{2}{n}\sum_i\ex(n, H_i)$.
We may suppose that $G$ is a minimal $N$-chromatic graph.
Since $G$ is minimal, each vertex must have degree at least $N-1$.
And by Brooks' theorem, since $G$ is neither complete nor an odd cycle, $G$ must have a vertex of degree $N$.
So
\[ 2|E| = \sum_{v \in V} \deg(v) \geq n(N-1) + 1 = 1+ 2\sum_i\ex(n,H_i) \]
And thus $|E| > \sum_i\ex(n, H_i)$.
Therefore, in any coloring of the edges of $G$ by $k$ colors, there must be a color class $i$ with more than $\ex(n, H_i)$ edges.
Hence, color $i$ must have a monochromatic $H_i$.
\end{proof}

\begin{corollary}
$P_4$ is $k$-good for $k \not = 3$.
\end{corollary}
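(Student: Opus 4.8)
The plan is to apply Theorem \ref{exnum} with $H_1 = \dots = H_k = P_4$, so that the only inputs are the Tur\'an number $\ex(n,P_4)$ and the Ramsey number $R_k(P_4)$. First I would record the extremal structure: a connected $P_4$-free graph is either a star or a triangle, so every $P_4$-free graph is a vertex-disjoint union of stars and triangles. Maximizing edges therefore uses triangles, giving $\ex(n,P_4)=n$ when $3\mid n$ and $\ex(n,P_4)\le n-1$ otherwise; in particular $\ex(n,P_4)\le n$ for every $n$. Consequently the threshold appearing in Theorem \ref{exnum} satisfies
\[ 1 + \frac{2}{n}\sum_{i=1}^{k}\ex(n,P_4) \;=\; 1 + \frac{2k}{n}\,\ex(n,P_4) \;\le\; 2k+1, \]
with equality possible only when $3\mid n$. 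Hence the corollary reduces, modulo the trivial complete-graph and odd-cycle cases, to the single inequality $R_k(P_4)\ge 2k+1$.

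Next I would establish $R_k(P_4)\ge 2k+1$ for $k\neq 3$ by producing, for each such $k$, a $k$-edge-coloring of $K_{2k}$ with no monochromatic $P_4$; equivalently, an edge-partition of $K_{2k}$ into $k$ disjoint unions of stars and triangles. The base cases are quick: $k=1$ is the folklore fact that a $4$-chromatic graph contains $P_4$, and for $k=2$ the decomposition of $K_4$ into a triangle plus the star at the fourth vertex works. For the general construction it is natural to split on $k \bmod 3$: when $k\equiv 1 \pmod 3$ a resolvable Steiner triple system on $2k+1$ points (the affine plane $\mathrm{AG}(2,3)$ when $k=4$) decomposes $K_{2k+1}$ into $k$ triangle-factors, and deleting a vertex leaves $k$ star-and-triangle graphs covering $K_{2k}$; the remaining residues need a direct triangle-plus-star decomposition, where the edge count is in fact tight, since $k\cdot\ex(2k,P_4)=\binom{2k}{2}$ when $3\nmid 2k$.

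With $R_k(P_4)\ge 2k+1$ in hand, I would finish by feeding a graph $G$ with $\chi(G)=R_k(P_4)$ into Theorem \ref{exnum}. If $G$ is the complete graph $K_{R_k(P_4)}$ we are done by the definition of the Ramsey number, and $G$ cannot be an odd cycle: that would force $R_k(P_4)=\chi(G)=3$, whereas $K_3$ is itself $P_4$-free and so $R_k(P_4)\ge 4$. In every other case the hypotheses of Theorem \ref{exnum} hold, since $\chi(G)=R_k(P_4)\ge 2k+1 \ge 1+\frac{2k}{n}\ex(n,P_4)$, and the theorem yields a monochromatic $P_4$ in every $k$-coloring.

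The main obstacle is the construction in the middle step. Everything else is bookkeeping, but proving $R_k(P_4)\ge 2k+1$ amounts to a design-theoretic edge-decomposition of $K_{2k}$ that is extremal (each color class must be a \emph{maximum} $P_4$-free graph when $3\nmid 2k$), and it is exactly here that $k=3$ breaks down: $K_6$ cannot be $3$-colored without a monochromatic $P_4$, so $R_3(P_4)=6=2\cdot 3$ falls one short of $2k+1$, which is precisely why that case is excluded.
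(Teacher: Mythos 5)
Your reduction is exactly the paper's: feed a minimal $R_k(P_4)$-chromatic graph into Theorem \ref{exnum}, using $\ex(n,P_4)\le n$ and the lower bound $R_k(P_4)\ge 2k+1$ for $k\ne 3$, after disposing of the complete-graph and odd-cycle cases. Your handling of those degenerate cases is correct, and your dismissal of the odd cycle (it would force $R_k(P_4)=3$, impossible since $K_3$ is $P_4$-free, so $R_k(P_4)\ge 4$) is actually cleaner than the paper's. The one substantive divergence is the middle step: the paper simply cites Irving \cite{Ir} and Wallis \cite{Wall} for $R_k(P_4)\ge 2k+1$, whereas you undertake to prove it by decomposing $K_{2k}$ into $k$ disjoint unions of stars and triangles --- and there your argument has a genuine gap.

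Your construction is complete only for $k\equiv 1 \pmod 3$: there $2k+1\equiv 3\pmod 6$, a resolvable Steiner triple system on $2k+1$ points exists (note this invokes the Ray-Chaudhuri--Wilson theorem, itself a substantial result, for all but small cases), and deleting a point turns each triangle-factor into triangles plus one edge, which is $P_4$-free; this part is fine, and in fact gives $R_k(P_4)\ge 2k+2$ for those $k$. But for $k\equiv 0,2\pmod 3$ you write only that the remaining residues ``need a direct triangle-plus-star decomposition,'' which is an assertion, not a construction --- and it is precisely the hard case. As you yourself compute, when $3\nmid k$ the count $k\cdot\ex(2k,P_4)=\binom{2k}{2}$ is tight, so every color class would have to be an \emph{extremal} $P_4$-free graph, making this a tight design-packing question; and your own closing observation about $k=3$ (where $K_6$ admits no such $3$-decomposition even though the edge count permits one) shows that tightness of the edge count does not by itself imply existence. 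So as written the proposal does not establish $R_k(P_4)\ge 2k+1$ for, e.g., $k=5,6,8,9,\dots$. The repair is either to cite the bound as the paper does --- Wallis's note \cite{Wall} exists exactly to supply these decompositions --- or to actually build them for the residues $k\equiv 0,2\pmod 3$, which is nontrivial design theory rather than bookkeeping.
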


\begin{proof}
Let $k \not = 3$.
Let $G$ be a graph with chromatic number $R_k(P_4)$.
The $k$-color Ramsey number $R_k(P_4)$ is at least $2k+1$, for all $k \not = 3$ \cite{Ir, Wall}.\footnote{Again, when $R_k(P_4) > 2k+1$, this result is stronger than necessary.}
Further, we know that $\ex(n, P_4) \leq n$ \cite{ErGa}.
If $G$ is complete, then by definition any $k$-coloring of its edges has a monochromatic $P_4$.
If $G$ is an odd cycle, then $\chi(G) = 3$, so $k = 1$. 
Since $P_4$ is acyclic, it is $1$-good.
Otherwise, we have
\[ 1 + \frac{2}{n}\sum_{i=1}^k\ex(n, H) \leq 2k +1 \leq \chi(G) \]
So we may apply Theorem \ref{exnum}.
\end{proof}

Theorem \ref{exnum} is not sufficient to show that $P_4$ is $3$-good.
We have $\ex(n, P_4) \leq n$ and $R_3(P_4) = 6$ \cite{Ir}.
Then
\[ 2\ex(n, P_4) \frac{3}{n} + 1 \leq 7 \]
But to apply Theorem \ref{exnum}, we would need $2\ex(n, P_4) \frac{3}{n} +  1 \leq 6$.
Similarly, Theorem \ref{exnum} is also not sufficient to show that $P_5$ is $2$-good.
A more involved proof will show that $P_5$ is $2$-good, and since $R_2(P_5) = R(P_4, P_5) = 6$ \cite{GeGy}, it will immediately imply that $(P_4, P_5)$ is also good.
However, we can give a simple proof that $(P_4, P_5)$ is good from Theorem \ref{exnum}.

\begin{corollary}
$(P_4, P_5)$ is good.
\end{corollary}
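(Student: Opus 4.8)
The plan is to mirror the proof that $P_4$ is $k$-good and apply Theorem \ref{exnum} directly, now with the two distinct graphs $H_1 = P_4$ and $H_2 = P_5$. First I would let $G$ be a graph on $n$ vertices with $\chi(G) = R(P_4, P_5)$; since $R(P_4,P_5) = 6$ by \cite{GeGy}, this records that $\chi(G) = 6$. The goal is to show that every $2$-coloring of $E(G)$ contains a $P_4$ in the first color or a $P_5$ in the second, so it suffices to check the hypotheses of Theorem \ref{exnum} for this pair.

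The key step is to bound the Tur\'an numbers sharply enough. By the Erd\H{o}s--Gallai theorem \cite{ErGa} we have $\ex(n, P_4) \le n$ and $\ex(n, P_5) \le \tfrac{3n}{2}$, so
\[ 1 + \frac{2}{n}\bigl(\ex(n,P_4) + \ex(n,P_5)\bigr) \le 1 + \frac{2}{n}\cdot \frac{5n}{2} = 6 = \chi(G). \]
This is exactly the threshold demanded by Theorem \ref{exnum}. Before invoking it I would dispose of the two excluded cases: if $G$ is complete then a monochromatic $P_4$ or $P_5$ exists by the definition of $R(P_4,P_5)$, and $G$ cannot be an odd cycle since an odd cycle has chromatic number $3 \ne 6$. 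In every remaining case Theorem \ref{exnum} applies and produces the desired monochromatic subgraph.

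The point to get right, rather than a genuine obstacle, is that the inequality above holds \emph{with equality} at the threshold, so there is no slack: the argument relies on the exact value $R(P_4,P_5) = 6$ together with the sharp Erd\H{o}s--Gallai bounds. This tightness is also what explains why the same method does not settle whether $P_5$ alone is $2$-good: there one would need $\chi(G) \ge 1 + \frac{2}{n}\cdot 2\,\ex(n,P_5) = 7$, which exceeds $R_2(P_5) = 6$, leaving a gap that forces the more involved argument of Section \ref{sec:3}.
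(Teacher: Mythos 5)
Your proposal is correct and follows the paper's own argument essentially verbatim: apply Theorem \ref{exnum} with the Erd\H{o}s--Gallai bounds $\ex(n,P_4)\le n$ and $\ex(n,P_5)\le \tfrac{3n}{2}$, after disposing of the complete-graph case via $R(P_4,P_5)=6$ and the odd-cycle case via $\chi=3\ne 6$. Your closing remark about the tightness of the inequality likewise matches the paper's discussion of why this method fails for $P_5$ being $2$-good.
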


\begin{proof}
We have $R(P_4, P_5) = 6$ \cite{GeGy}.
Let $G$ be a $6$-chromatic graph.
$G$ is not an odd cycle.
If $G$ is complete, then by $R(P_4, P_5) = 6$, every $2$-coloring of its edges has a $P_4$ in color $1$ or a $P_5$ in color $2$.
Otherwise, we can apply Theorem \ref{exnum}, since
\[ 1 + \frac{2}{n}(\ex(n, P_4) + \ex(n, P_5)) \leq 1 + \frac{2}{n}\left(n + \frac{3}{2}n\right)  = 6 = \chi(G) \qedhere \]
\end{proof}

The question of whether $P_4$ is $3$-good seems unique, as it evades proof by both this technique and the one presented in section \ref{sec:3}.
One possible method of attack is to notice that the only $P_4$-free graphs are (disjoint unions of) stars and triangles.
So if a $3$-colored graph is to have no monochromatic $P_4$, each of the three color classes must be the disjoint union of stars and triangles.

In the case where each color class is made up only of triangles, we have an interesting reduction.
Suppose we have a vertex-critical $6$-chromatic graph $G$ and a $3$-coloring of the edges such that each monochromatic connected component is a triangle.
Since $G$ has minimum degree at least $5$, every vertex must be in $3$ monochromatic triangles, so in fact $G$ must be $6$-regular.
If $P_4$ is $3$-good, no such $6$-chromatic $6$-regular graph with a decomposition into three sets of triangles can exist.

We can rephrase this as a problem about hypergraphs.
Given $G$, we make a hypergraph $\hH$ by replacing each monochromatic triangle with a hyperedge.
The dual $\hH^\star$ of $\hH$ is $3$-uniform, $3$-regular, $3$-partite, and linear (every pair of edges intersect in at most one vertex).
And importantly, $\chi(G) = \chi'(\hH^\star)$, where $\chi'$ is the chromatic index - the fewest number of colors needed to color the edges of the hypergraph such that no two intersecting edges have the same color.
Further, we can create every such dual hypergraph from some $P_4$-avoiding $G$.
Thus, to show that $P_4$ is good, it is necessary (but not sufficient) to answer the following question in the affirmative:

\begin{question}
Let $\hH$ be a $3$-uniform, $3$-regular, $3$-partite, linear hypergraph. Is 
\\* ${\chi'(\hH) \leq 5}$?
\end{question}

\section{A new method to establish that graphs are $2$-good}
\label{sec:3}
First, we fix some notation.
Consider a partially oriented graph $G$ with edges colored red and blue.
If $T$ is a subgraph of $G$ and $v$ is a vertex of $T$, then $d_T(v)$, $d_T^-(v)$, and $d_T^+(v)$ denote respectively the unoriented degree of $v$, the in-degree of $v$, and the out-degree of $v$ within $T$.

We denote the red subgraph of $G$ by $G_r$ and the blue subgraph by $G_b$.
We call the connected components of $G_r$ and $G_b$ the {\em monochromatic parts} of $G$.
If $T$ is a monochromatic part of $G$ and $T$ has no oriented edges, $T$ is a {\em main part} of $G$.
If $T$ does have oriented edges, we define two subsets of the vertices of $T$.
$T_-$ is the set of vertices $v$ of $T$ such that $d_T^-(v) \geq 2$ and there are no nontrivial directed paths from $v$ to $u$ such that $d_T^-(u) \geq 2$, for any vertices $u$ in $T$. 
And $T_+$ is the set of vertices $v$ such that there is a nontrivial directed path from $v$ to $u$, for some $u \in T_-$.

\begin{definition}
\label{boundeddef}
A partial orientation of a graph $T$ is {\em $(s,t)$-bounded} if 
\begin{enumerate}
\item[(1)] For every vertex $v$ with $d_T^-(v) > 0$,
\[  d_T(v)+ d_T^-(v) + \min\{1,d_T^+(v)\}   \le s \]
\item[(2)] For every vertex $v$ of $T$, 
\[ d_T(v) + \min\{1,d_T^+(v)+d_T^-(v)\}  \le t-1 \]
\item[(3)] if $T_-$ (equivalently, $T_+$) is nonempty, then $|T_-| > |T_+|$
\end{enumerate}
A partial orientation is {\em $(n,s,t)$-bounded} if it is $(s,t)$-bounded and
\begin{enumerate}
\item[(4)] If $T$ has more than $n$ vertices, then at least one edge of $T$ is oriented.
\end{enumerate}
A family of graphs $\tT$ is {\em $n$-bounded} if there are nonnegative integers $s$ and $t$ such that $n>s+t-1$, $n>2s+2$, and every graph in $\tT$ has an $(n,s,t)$-bounded partial orientation.
\end{definition}

\begin{lemma}
\label{5bounded}
The family of $P_5$-free graphs is $5$-bounded.
\end{lemma}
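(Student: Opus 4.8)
The plan is to take $s=1$ and $t=4$ (with $n=5$); these values are in fact forced. The requirement $n>2s+2$ gives $s\le 1$, and $s=0$ is impossible, since condition~(1) then forbids any oriented edge (its left-hand side is at least $d_T^-(v)\ge 1>0$), whereas condition~(4) demands an oriented edge once a part has more than five vertices; so $s=1$. Likewise $n>s+t-1$ gives $t\le 4$, while condition~(2) below needs $t-1\ge 3$, so $t=4$. One checks at once that $5>s+t-1=4$ and $5>2s+2=4$, so the two parameter inequalities hold. The orientation will be built on each connected component separately, and since the monochromatic parts of a two-coloured graph are connected, it suffices to give a $(5,1,4)$-bounded partial orientation of an arbitrary connected $P_5$-free graph $T$.

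The heart of the argument is a structural classification of the connected $P_5$-free graphs. I would fix a longest path of $T$; since $T$ has no $P_5$ it has at most four vertices. If it has at most three vertices, then $T$ is a star or a triangle, and any $T$ on at most four vertices needs no orientation (see below). Otherwise let $abcd$ be a longest path. Maximality forces every vertex outside the path to be adjacent only to the interior vertices $b,c$ (a neighbour of $a$ or $d$ would extend the path). A short case analysis, each forbidden adjacency exhibiting an explicit $P_5$, then shows that no outside vertex is adjacent to both $b$ and $c$ (else $a\,b\,u\,c\,d$), that the outside vertices form an independent set with no edges across the two sides, and that a chord $ac$, $bd$, or $ad$ can occur only after the outside vertices on the relevant side(s) are deleted. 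Carrying this out yields exactly three shapes once $|V(T)|>4$: a star, a double star (two adjacent hubs, each carrying pendant leaves), and a triangle all of whose pendant edges hang from a single vertex.

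With the classification in hand the orientation is uniform: \emph{orient every edge incident to a degree-one vertex toward that vertex, and leave all other edges unoriented.} This orients the hub--leaf edges of a star and double star and the apex--pendant edges of the triangle, while leaving the hub--hub edge, the three triangle edges, and all edges of any part on at most four vertices unoriented. I would then verify the four conditions. For~(1), the head of every oriented edge is a pure sink of degree one, so its left-hand side is $0+1+0=1\le s$. For~(2), every vertex carrying an oriented edge has unoriented degree at most $2$ — the triangle apex, with unoriented degree exactly $2$ from its two triangle edges, is the extremal case and gives $2+\min\{1,d_T^+\}=2+1=3=t-1$ — while every unoriented vertex has degree at most $3$ (a triangle vertex has degree $2$, and a part on at most four vertices has maximum degree $3$). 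Condition~(3) is vacuous: only leaves receive an oriented edge and each receives exactly one, so no vertex has $d_T^-(v)\ge 2$ and $T_-=\emptyset$. Finally, every connected $P_5$-free graph on more than five vertices is a large star, double star, or triangle-with-pendants, each of which has a degree-one vertex and hence an oriented edge, giving~(4).

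I expect the structural classification to be the only real obstacle; it is elementary but must be done carefully, since several superficially plausible $P_5$-free candidates — two triangles sharing a vertex, a triangle with pendants at two distinct vertices, a ``book'' of triangles on a common edge, or two hubs with a common neighbour — in fact contain a $P_5$ and must be ruled out by writing down the offending path. Once the three shapes are pinned down, conditions~(1)--(4) are mechanical, and the only delicate numerical point is the tight estimate in~(2) at the triangle apex, which is precisely why $t=4$ rather than $t=3$ is required.
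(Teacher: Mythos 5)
Your proposal is correct and takes essentially the same route as the paper: the paper also uses the $(5,1,4)$-bounded orientation that directs each pendant edge toward its leaf, and verifies conditions (1)--(4) from the same structural fact (deleting the degree-one vertices of a connected $P_5$-free component leaves a $1$-, $2$-, or $3$-clique). Your only deviations --- leaving parts on at most four vertices wholly unoriented, and writing out the longest-path classification and the forcing of $s=1$, $t=4$, which the paper merely asserts or omits --- are cosmetic.
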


\begin{proof}
A {\em pendant edge} is an edge incident with a degree $1$ vertex.
For each monochromatic part of $G$, orient each pendant edge towards the associated leaf.

\begin{myfigure}
\begin{graph}
\draw (0,0) node(v1) {};
\draw (1,1) node(v2) {};
\draw (2,0) node(v3) {};
\draw (1,-1) node(v4) {};
\path[draw,blue] (v1) -- (v2);
\path[draw,blue] (v1) -- (v3);
\path[draw,blue] (v1) -- (v4);
\path[draw,blue] (v2) -- (v3);
\path[draw,blue] (v2) -- (v4);
\path[draw,blue] (v3) -- (v4);

\draw (3.22, -0.71) node(v5) {};
\draw (3.22, 0.71) node(v6) {};
\draw (4.59, -0.34) node(v7) {};
\draw (4.20, 0.1) node(v8) {};
\path[draw,red,dashed] (v3) -- (v5);
\path[draw,red,dashed] (v3) -- (v6);
\path[draw,red,dashed] (v5) -- (v6);
\path[redarrow,dashed] (v5) -- (v1);
\path[redarrow,dashed] (v5) -- (v4);
\path[redarrow,dashed] (v5) -- (v7);
\path[redarrow,dashed] (v5) -- (v8);

\draw (4.10, 1.0) node(v9) {};
\draw (4.80, 0.8) node(v10) {};
\draw[draw,blue] (v9) -- (v10);
\draw[bluearrow] (v9) -- (v5);
\draw[bluearrow] (v9) -- (v6);
\draw[bluearrow] (v10) -- (v7);
\draw[bluearrow] (v10) -- (v8);
\end{graph}
\caption{The partial orientation of a $2$-colored graph with no monochromatic $P_5$.}
\end{myfigure}

We call this the {\em standard orientation} of the pendant edge.
It is easy to see that this is a $(5,1,4)$-bounded orientation:
\begin{enumerate}
\item[(1)] There are no vertices with an incoming edge and an outgoing edge of the same color. 

\item[(3)] No vertex has in-degree greater than $1$, so $T_-=T_+=\varnothing$.

\item[(4)] Each component without pendant edges has at most $4$ vertices.  

\item[(2)] In a component with pendant edges, if the degree one vertices are removed, the resulting graph must be a $1$, $2$, or $3$-clique.
Along with observation (4), this implies that every vertex has at most $3$ unoriented edges per color.
\qedhere
\end{enumerate}
\end{proof}

We need the following technical lemma.
\begin{lemma}
\label{technicallemma}
Let $G=(V,E)$ be a $2$-colored graph with a given partial orientation.
Let $n$, $s$, and $t$ be nonnegative integers such that $n>s+t-1$ and $n>2s+2$.
Suppose that $G$ has minimum degree at least $n$.
If the partial orientation of each monochromatic part is $(s,t)$-bounded, then every monochromatic part of $G$ is a main part.
\end{lemma}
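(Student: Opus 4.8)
The plan is to argue by contradiction: suppose some monochromatic part contains an oriented edge, and let $M$ denote the total number of oriented edges across both colours, so $M\ge 1$. For a vertex $v$ write $R$ and $B$ for the red and blue parts containing it, and abbreviate its total degree in a part $T$ by $D_T(v)=d_T(v)+d^-_T(v)+d^+_T(v)$. Let $S$ be the set of \emph{heads}, the vertices with positive total in-degree $d^-_R(v)+d^-_B(v)$. Summing total in- and out-degrees over all vertices gives $\sum_v\bigl(d^-_R(v)+d^-_B(v)\bigr)=\sum_v\bigl(d^+_R(v)+d^+_B(v)\bigr)=M$, and since every vertex outside $S$ has in-degree $0$, subtracting yields the structural identity $\sum_{v\in S}\bigl(d^-_R(v)+d^-_B(v)\bigr)-\sum_{v\in S}\bigl(d^+_R(v)+d^+_B(v)\bigr)=\sum_{v\notin S}\bigl(d^+_R(v)+d^+_B(v)\bigr)\ge 0$. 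Thus it suffices to prove, from the $(s,t)$-bounded hypotheses, the \emph{opposite} strict inequality, namely that the heads carry strictly more out-degree than in-degree in aggregate; this contradiction forces $M=0$, so every part is a main part.

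The engine is the interplay between the degree bounds and the minimum-degree hypothesis. Since $d_R(v)+d_B(v)\ge n>2s+2$, every vertex has a part in which its total degree exceeds $s+1$; call it its heavy part. Condition (1) shows that an incoming edge in a part $T$ forces $d_T(v)+d^-_T(v)\le s$, so a vertex that is a sink in $T$ (an incoming but no outgoing edge there) has $D_T(v)\le s$. Consequently, if $v$ is a head whose incoming edge lies in its light part, then $D_{\text{heavy}}(v)\ge n-s$, which by $n>s+t-1$ exceeds $t-1$; by condition (2) this part must contain an oriented edge at $v$, and since an incoming edge there would again cap $D_{\text{heavy}}(v)$ at $s$ via condition (1), that edge must be outgoing. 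Carrying the casework through (also using $n>2s+2$ to rule out a vertex that is a sink in both parts), I expect to show that every head has out-degree at least $1$ and that every head of total in-degree exactly $1$ has out-degree at least $2$, giving each such head a strict out-surplus.

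The remaining and hardest case is a head of total in-degree at least $2$: such a vertex can be a genuine sink whose out-degree condition (1) caps below its in-degree, so locally it is in deficit. These are precisely the vertices recorded by $T_-$, with their directed ancestors recorded by $T_+$. My plan is to pay for the in-degree deficit of the $T_-$ vertices using the out-surplus of the many in-degree-$1$ heads and of the ancestors in $T_+$, invoking condition (3), $|T_-|>|T_+|$, to certify that enough surplus is available: informally, each excess incoming edge at a high in-degree sink is charged to an outgoing edge of some in-degree-$1$ head, and $(3)$ guarantees the supply. I anticipate that making this redistribution precise — matching the $T_-$ deficit to the surplus counted through $(3)$ and the in-degree-$1$ heads, and verifying that the two numerical hypotheses $n>s+t-1$ and $n>2s+2$ are spent with exactly the right slack — is the crux of the proof; by contrast, the low-in-degree estimates are routine once the heavy/light casework is in place.
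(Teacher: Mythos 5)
Your global skeleton matches the paper's: both arguments exploit the identity $\sum_v d_G^-(v)=\sum_v d_G^+(v)$ and aim to show that, under the $(s,t)$-bounds and minimum degree $n$, the vertices receiving oriented edges carry a strict aggregate out-surplus, which is impossible unless no edge is oriented. Your ``easy'' estimates are also essentially the paper's Claim 1 (there, via the heavy/light split you describe, every vertex with at most one incoming edge per color is shown to satisfy $d_G^+(v)>d_G^-(v)$, and in fact every vertex of $T_-$ gets $d_G^+(v)\ge 2$, or $\ge 4$ if it lies in $T_{1-}$ and $T_{2-}$ for two parts). But the heart of the lemma --- the paper's Claim 2, handling $G_-\cup G_+$ --- is exactly the step you defer with ``I anticipate that making this redistribution precise \dots is the crux,'' so the proposal as written has a genuine gap: the one case where local deficits occur is not proved, only flagged.

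Moreover, the charging scheme you sketch points the wrong way and would not survive being made precise. The incoming edges at a $T_-$ vertex come from vertices of $T_+$ \emph{in the same part}; in a set-level count over $G_-\cup G_+$ these are internal and cancel, so they are not what needs paying for. The dangerous edges are the ones entering $G_-\cup G_+$ from outside, and the paper's key observations are that (i) any such edge must end in $G'_-\cup G'_+$ where $G'_\pm = G_\pm\setminus G_\mp$, and each such vertex receives at most one external edge (a second one, of either color, would force its tail into $G_+$ via the path/color argument), so the external in-degree is at most $|G'_-|+|G'_+|$; (ii) condition (3) then yields $|G'_+|<\sum_{T}|T_-\cap G'_-|$ and hence external in-degree $<2\sum_T|T_-\cap G'_-|$, which is paid by the $\ge 2$ (resp.\ $\ge 4$) out-edges of each vertex of $G'_-$, with a final correction for those out-edges landing back inside $G_-\cup G_+$ (the paper shows their heads receive no external in-edge at all). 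Your proposed payers cannot do this job in general: ``in-degree-1 heads'' may not exist (e.g., every oriented edge may run from an in-degree-0 vertex of $T_+$ into $T_-$), and their surplus is in any case needed to cancel their own incoming edges; likewise $T_+$ vertices have no guaranteed surplus, since a vertex of $T_+$ may itself have in-degree up to $s$ while the definition guarantees only one outgoing edge. So the missing ideas are concretely: the passage to $G'_-$, $G'_+$, the ``at most one external in-edge per vertex'' color argument, and the use of (3) as a cardinality (not flow) bound --- none of which your redistribution plan supplies.
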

The proof of Lemma \ref{technicallemma} is presented after the proof of Theorem \ref{maintheorem}, once the application is clearly in mind.

\begin{theorem}
\label{maintheorem}
Let $H$ be an arbitrary graph.
If the family of $H$-free graphs is $\left(R_2(H)-1\right)$-bounded, then $H$ is $2$-good.
\end{theorem}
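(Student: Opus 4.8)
The plan is to argue by contradiction. Suppose $H$ is not $2$-good, so there is an $R_2(H)$-chromatic graph $G$ together with a $2$-coloring of its edges having no monochromatic copy of $H$. Passing to a vertex-critical subgraph, I may assume $G$ is vertex-critical of chromatic number exactly $R_2(H)$: deleting vertices and edges only destroys potential copies of $H$, so the restricted coloring is still monochromatic-$H$-free. Set $n := R_2(H)-1$. Vertex-criticality then gives $\delta(G) \ge \chi(G)-1 = n$.

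Next I would install the orientation supplied by the hypothesis. Since the family of $H$-free graphs is $n$-bounded, fix nonnegative integers $s,t$ with $n > s+t-1$ and $n > 2s+2$ such that every $H$-free graph admits an $(n,s,t)$-bounded partial orientation. Because the coloring has no monochromatic $H$, every monochromatic part of $G$ is $H$-free, so I orient each part by its $(n,s,t)$-bounded orientation. This makes every monochromatic part $(s,t)$-bounded, while $\delta(G)\ge n$ with $n>s+t-1$ and $n>2s+2$; these are exactly the hypotheses of Lemma \ref{technicallemma}. Invoking that lemma, every monochromatic part of $G$ is a main part, i.e.\ carries no oriented edge. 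Condition (4) of $(n,s,t)$-boundedness then forces every monochromatic part to have at most $n$ vertices.

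It remains to convert ``all monochromatic components have order at most $n$'' into a contradiction with $\chi(G)=n+1$, and this is the step I expect to be the crux. I would isolate the clean statement that any $2$-edge-colored graph whose monochromatic components all have order at most $n$ satisfies $\chi \le n$. To prove it, form the bipartite multigraph $B$ whose two sides are the red components and the blue components of $G$, placing, for each vertex $v$ of $G$, one edge of $B$ joining the red component and the blue component containing $v$. The degree in $B$ of a component-node equals the order of that component, hence is at most $n$, so $\Delta(B)\le n$. By König's edge-coloring theorem a bipartite multigraph is $\Delta$-edge-colorable, so $B$ has a proper edge-coloring with $n$ colors; reading this as a coloring of $V(G)$ (each vertex is an edge of $B$), any two vertices lying in a common monochromatic component receive distinct colors, whence every red edge and every blue edge is properly colored. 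Thus $\chi(G)\le n$, contradicting $\chi(G)=R_2(H)=n+1$ and completing the proof. The real content is this final coloring bound: the naive product coloring of the red and blue component structures only yields $\chi\le n^2$, and it is the passage through König's theorem that recovers the sharp bound $n$, while everything else is bookkeeping arranged to feed the hypotheses of Lemma \ref{technicallemma}.
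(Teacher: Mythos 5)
Your proposal is correct and follows essentially the same route as the paper's proof: pass to a minimal (vertex-critical) $R_2(H)$-chromatic graph to get minimum degree $R_2(H)-1$, invoke Lemma \ref{technicallemma} to conclude every monochromatic part is a main part of order at most $R_2(H)-1$, and then apply K\"onig's edge-coloring theorem to the bipartite multigraph of red and blue components (vertices of $G$ as edges) to induce a proper vertex coloring with $R_2(H)-1$ colors, contradicting $\chi(G)=R_2(H)$. The only cosmetic difference is that you admit singleton components as nodes of the multigraph, which lets you bypass the paper's short degree argument that every vertex lies in two main parts.
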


\begin{proof}
Let $G$ be a minimal $R_2(H)$-chromatic graph, and consider a $2$-coloring of $G$.
Assume for contradiction that $G$ has no monochromatic subgraph $H$.
Consider an orientation of $G$ for which the orientation of the monochromatic parts witnesses the $(R_2(H)-1)$-boundedness of $G$.
$G$ has minimum degree $R_2(H)-1$, so by Lemma \ref{technicallemma}, every monochromatic part of $G$ is a main part with this orientation.

Define a multigraph $G'$ as follows.
Let the vertex set of $G'$ be the set of main parts of $G$.
For any two main parts, there is an edge between them for each vertex they have in common.
Since every main part has at most $R_2(H)-1$ vertices and every vertex of $G$ has degree at least $R_2(H)-1$, each vertex must be in two main parts, so every vertex in $G$ is represented as an edge in $G'$.
Note that $G'$ is bipartite, since no two main parts of the same color share a vertex.
Further, $G'$ has maximum degree $R_2(H)-1$ (the maximum size of a main part).
By K\"onig's line coloring theorem, $G'$ has a proper edge coloring with $R_2(H)-1$ colors.
Since each vertex in $G$ is identified with an edge in $G'$, this edge coloring induces a vertex coloring of $G$.
If two vertices are adjacent in $G$, they must be in the same main part, so the corresponding edges in $G'$ are adjacent.
Therefore, the induced vertex coloring is proper, so $\chi(G) \leq R_2(H)-1$; contradiction.
\end{proof}

\begin{corollary}
$P_5$ is $2$-good.
\end{corollary}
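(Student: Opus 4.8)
The plan is to verify that $P_5$ satisfies the hypothesis of Theorem \ref{maintheorem} and then invoke it directly; essentially all the substantive work has already been packaged into that theorem and into Lemma \ref{5bounded}, so the corollary reduces to matching up the relevant parameters. Concretely, I would treat this as a bookkeeping argument rather than a fresh combinatorial one.

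First, I would pin down the Ramsey number. Since $R_2(P_5) = R(P_4, P_5) = 6$ \cite{GeGy}, we have $R_2(P_5) - 1 = 5$. The hypothesis of Theorem \ref{maintheorem}, specialized to $H = P_5$, therefore asks exactly whether the family of $P_5$-free graphs is $5$-bounded.

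Second, I would observe that this is precisely the content of Lemma \ref{5bounded}. That lemma exhibits the standard orientation (pendant edges oriented toward their leaves) as a $(5,1,4)$-bounded orientation of every $P_5$-free graph, and with $s = 1$, $t = 4$, $n = 5$ the side conditions $n > s + t - 1$ and $n > 2s + 2$ both hold, so the family is indeed $5$-bounded in the sense of Definition \ref{boundeddef}. Applying Theorem \ref{maintheorem} with $H = P_5$ then yields immediately that $P_5$ is $2$-good.

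The only point requiring genuine care is confirming that the constant $5$ appearing in Lemma \ref{5bounded} really is $R_2(P_5) - 1$ rather than an unrelated numerical coincidence; this is exactly what the input $R_2(P_5) = 6$ guarantees, and it is the linchpin that lets the general machinery apply. I do not expect any remaining combinatorial obstacle at this stage, since all of the difficulty has already been discharged in the proofs of Lemma \ref{technicallemma}, Theorem \ref{maintheorem}, and Lemma \ref{5bounded}. In effect, the corollary is the payoff that motivates the definitions, and the ``main obstacle'' for $P_5$ lies entirely upstream in establishing that the standard orientation meets all four boundedness conditions.
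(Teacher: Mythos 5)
Your proof is correct and matches the paper's own argument exactly: cite $R_2(P_5)=6$, invoke Lemma \ref{5bounded} for $5$-boundedness, and apply Theorem \ref{maintheorem}. Your extra check that the $(5,1,4)$ parameters satisfy $n>s+t-1$ and $n>2s+2$ is a harmless (and valid) elaboration of what the paper leaves implicit.
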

\begin{proof}
The $2$-color Ramsey number of $P_5$ is $R_2(P_5) = 6$ \cite{GeGy}.
By Lemma \ref{5bounded}, the family of $P_5$-free graphs is $5$-bounded, so by Theorem \ref{maintheorem}, $P_5$ is $2$-good.
\end{proof}

\begin{proof}[Proof of Lemma \ref{technicallemma}]
Let $G=(V,E)$, $n$, $s$, and $t$ be given as in Lemma \ref{technicallemma}.
Elementarily, $\sum_{v \in V} \indeg(v) = \sum_{v \in V} \outdeg(v)$.
Let $\tT$ be the set of all monochromatic parts of $G$.
Then define
\[ G_- := \bigcup_{T \in \tT}T_- \qquad \text{ and } \qquad G_+ := \bigcup_{T \in \tT}T_+ \]
Also define 
\[ X := \{v \in V\setminus(G_- \cup G_+) \pipe \indeg(v) > 0 \text{ or } \outdeg(v) > 0 \} \]
Note that every vertex incident with an oriented edge is in $G_-$, $G_+$, or $X$.
Then we have
\[ 0 = \sum_{v \in V} \indeg(v) - \outdeg(v) = \sum_{v \in G_-\cup G_+} \indeg(v) - \outdeg(v) + \sum_{v \in X} \indeg(v) - \outdeg(v) \]
To conclude that $G_-$, $G_+$, and $X$ must be empty, we show in Claim \ref{X empty} that the sum over $X$ is positive if $X$ is nonempty and then in Claim \ref{G-+ empty} that the sum over $G_- \cup G_+$ is positive (if $G_- \cup G_+$ is nonempty).
\begin{claim}
\label{X empty}
Let $v$ be a vertex in $G$ which is incident with at least one oriented edge.
If $v$ has at most one incoming edge of each color (i.e. $v \not \in T_-\cup T_+$ for any $T$), then $\outdeg(v) > \indeg(v)$.
If $v$ is in $T_-$ for just one $T$, then $d_G^+(v) \geq 2$.
And if $v$ is in $T_{1-}$ and $T_{2-}$ for two distinct $T_1,T_2\in \tT$, then $d_G^+(v) \geq 4$.

This will immediately imply that $\sum\limits_{v \in X} \indeg(v) - \outdeg(v)$ is positive if $X$ is nonempty.
\end{claim}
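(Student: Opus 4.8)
The plan is to deduce all three inequalities from a single local accounting at $v$ and then read off the statement about the sum. Let $R$ and $B$ be the red and blue monochromatic parts containing $v$, and abbreviate $U=d_R(v)+d_B(v)$ (total unoriented degree), $I=\indeg(v)=d_R^-(v)+d_B^-(v)$, and $O=\outdeg(v)=d_R^+(v)+d_B^+(v)$. Every edge at $v$ is counted once, so $U+I+O=d_G(v)\ge n$ by the minimum-degree hypothesis of Lemma~\ref{technicallemma}. Thus every \emph{lower} bound on $O$ comes from an \emph{upper} bound on $U+I$ (or on $U+2I$), which I get by applying conditions (1) and (2) of $(s,t)$-boundedness color by color. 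The single device used throughout is that one outgoing edge activates the relevant $\min\{1,\cdot\}$ term in (1) or (2) and so shaves the corresponding degree bound by $1$; dually, $O=0$ is impossible, since a vertex incident with an oriented edge but with no out-edge would satisfy $d_G(v)=U+I\le s+t-1<n$. Because the hypotheses $n>s+t-1$ and $n>2s+2$ are tight to within one unit, this single shaving is precisely what upgrades each ``$\ge$'' to the ``$>$'' or extra ``$+1$'' that the claim needs.

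For the first bullet, $v\notin T_-$ in either color forces $d_R^-(v)\le1$ and $d_B^-(v)\le1$, so $I\le2$, and I would split on $I$. If $I=0$, the oriented edge at $v$ is outgoing and $O\ge1>0$. If $I=1$, with the in-edge in red say, then (1) gives $d_R(v)\le s-1$ and (2) gives $d_B(v)\le t-1$, so $U\le s+t-2$; the forced out-edge shaves one of these by $1$, giving $U\le s+t-3$, whence $O\ge n-(s+t-3)-1\ge2>I$ using $n\ge s+t$. If $I=2$, condition (1) applies in both colors and yields $U\le 2s-2$, so $O\ge n-2s\ge3>2=I$ straight from $n>2s+2$. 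In every case $\outdeg(v)>\indeg(v)$.

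For the remaining bullets, if $v\in T_-$ in a color then $d_T^-(v)\ge2$ there, and (1) immediately pins the local structure as $d_T(v)+d_T^-(v)\le s-\min\{1,d_T^+(v)\}$. If $v$ lies in $T_-$ for \emph{both} colors, adding the two instances gives $U+I\le 2s-\min\{1,d_R^+(v)\}-\min\{1,d_B^+(v)\}$, so $O\ge n-2s+\min\{1,d_R^+(v)\}+\min\{1,d_B^+(v)\}\ge3+1=4$, the ``$+1$'' being the unavoidable out-edge. If $v$ lies in $T_-$ for exactly one color, I would bound that color by (1) and the other by (1) or (2) according to whether it carries an in-edge, reaching $U+I\le\max\{s+t-1,2s\}$; the shaving lowers this to $\le n-2$ as soon as an out-edge appears, and $O=0$ is again impossible, so $O\ge2$.

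The first bullet says exactly that each $v\in X$ is a net source, $\outdeg(v)-\indeg(v)\ge1$; summing, $\sum_{v\in X}\bigl(\outdeg(v)-\indeg(v)\bigr)\ge|X|$, which is positive precisely when $X\ne\varnothing$ --- this is the contribution that will clash with $\sum_{v\in V}\bigl(\outdeg(v)-\indeg(v)\bigr)=0$ in the proof of Lemma~\ref{technicallemma}. I expect the real difficulty to be the uniform tightness rather than any one estimate: every bound meets $n$ within a single unit, so the argument succeeds only if the lone $\min\{1,\cdot\}$ reduction from an out-edge is charged correctly and the case $O=0$ is explicitly excluded. A secondary point to settle is the parenthetical equivalence in the hypothesis --- that $v\notin T_-\cup T_+$ really does force in-degree at most $1$ in each color --- which follows because from any vertex of in-degree at least $2$ one can chase out-edges into $T_-$ (clear when the orientation is acyclic, as the standard orientation is).
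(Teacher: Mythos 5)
Your proposal is correct and takes essentially the same route as the paper's proof: bound the unoriented degree plus in-degree color by color via conditions (1) and (2), use the minimum degree together with $n>s+t-1$ and $n>2s+2$ to force outgoing edges, and let each forced out-edge activate a $\min\{1,\cdot\}$ term to shave the bound by one and force another out-edge --- your split by $I=0,1,2$ and by $T_-$-membership corresponds exactly to the paper's Cases 0, 1, and 2 (organized by the number of colors among the incoming edges). Your closing caveat about the parenthetical equivalence --- that $v\notin T_-\cup T_+$ forces in-degree at most one per color only when in-degree-$\geq 2$ vertices cannot lie on directed cycles, as holds for the acyclic orientations the paper actually constructs --- identifies a step the paper asserts without proof, so flagging it is appropriate rather than a gap in your argument.
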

\begin{proof}
Consider the possible combinations of colors for the incoming edges of $v$.
\medskip

\case{0} If $v$ has no incoming edges, then the oriented edge incident with $v$ must be an outgoing edge.
\medskip

\case{1} If $v$ has incoming edges of only one color, we may assume without loss of generality that they are red.
By (1) and (2) (in Definition \ref{boundeddef})
\[ \big(d_\red(v) + d_\red^-(v)\big) + \big(d_\blue(v) + d_\blue^-(v)\big) \leq s + (t-1)\] 
Since $n>s+t-1$, $v$ must have an outgoing edge.
If the outgoing edge is red, then use (1), and if the outgoing edge is blue, use (2).
In either case, we can strengthen the above to 
\[ \big(d_\red(v) + d_\red^-(v)\big) + \big(d_\blue(v) + d_\blue^-(v)\big) \leq s + (t-1) - 1\] 
so $v$ must have a second outgoing edge.
If $v$ has in-degree at most one in each color, then $v$ has only one incoming edge, so $d_G^+(v) > d_G^-(v)$.
\medskip

\case{2} Suppose $v$ has incoming edges of $2$ colors.
Then $d_\red(v)+d_\red^-(v) \leq s$ and $d_\blue(v)+d_\blue^-(v) \leq s$ by (1).
Since $n > 2s+2$, $v$ has at least three outgoing edges.
So there is some color - say, red - such that $v$ has a red incoming edge and a red outgoing edge.
Then by (1), 
\[ \big(d_\red(v) + d_\red^-(v)\big) + \big(d_\blue(v) + d_\blue^-(v)\big) \leq (s-1) + s = 2s-1 \] 
so $v$ has at least $4$ outgoing edges.
If $v$ has in-degree at most one in each color, then $v$ has only two incomings edges, so $d^+(v) > d^-(v)$.
\end{proof}

\begin{claim}
\label{G-+ empty}
$\sum\limits_{v \in G_-\cup G_+} \indeg(v) - \outdeg(v)$ is postive if $G_- \cup G_+$ is nonempty.
\end{claim}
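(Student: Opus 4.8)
The plan is to prove the asserted inequality by decomposing $\sum_{v\in G_-\cup G_+}(\indeg(v)-\outdeg(v))$ according to the monochromatic parts that contribute the vertices of $G_-\cup G_+$, and then to argue that the in-degrees forced at the numerous ``sink'' vertices of the sets $T_-$ outweigh the out-degrees emanating from the fewer ``source'' vertices of the sets $T_+$. The strict count that powers this is condition (3) of Definition \ref{boundeddef}, namely $|T_-|>|T_+|$, together with the global out-degree bounds for $T_-$-vertices already isolated in Claim \ref{X empty}.

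First I would record the two closure properties of these sets inside a fixed part $T$. Since a vertex lies in $T_+$ exactly when it begins a nontrivial directed path ending in $T_-$, the set $T_+$ is closed under taking in-neighbours: if $v\to w$ is an oriented edge and $w\in T_+$, then $v\in T_+$. Moreover every in-edge of a vertex $x\in T_-$ originates in $T_+$, because such an edge is itself a nontrivial directed path from its tail into $T_-$. Consequently, writing $A_T:=T_-\cup T_+$, the set $A_T$ receives oriented edges only from within $A_T$, so the in-degrees accumulated on $A_T$ are accounted for entirely by edges internal to $A_T$, while the out-edges of $A_T$ split into internal edges and edges leaving $A_T$. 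This is the bookkeeping I would use to express each part's contribution to the sum in terms of $T_-$, $T_+$, and the boundary of $A_T$ alone.

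Next I would feed in the quantitative input. Each vertex of $T_-$ carries in-degree at least $2$ inside its part by definition, and by Claim \ref{X empty} a vertex lying in exactly one $T_-$ has global out-degree at least $2$, while a vertex lying in two sets $T_-$ has global out-degree at least $4$. Counting the in-edges absorbed across all sink vertices against the out-edges discharged by the source vertices, and invoking the strict surplus $|T_-|>|T_+|$ part by part, I would conclude a strictly positive net contribution; a vertex that is simultaneously a sink (or source) in both colours is counted in two parts and so only reinforces the surplus. Summing over all parts then gives that $\sum_{v\in G_-\cup G_+}(\indeg(v)-\outdeg(v))$ is strictly positive whenever $G_-\cup G_+$ is nonempty. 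Combined with Claim \ref{X empty} through the identity $0=\sum_{v\in G_-\cup G_+}(\indeg(v)-\outdeg(v))+\sum_{v\in X}(\indeg(v)-\outdeg(v))$, the positivity of both summands forces $G_-\cup G_+$ and $X$ to be empty, which completes the proof of Lemma \ref{technicallemma}.

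The main obstacle I anticipate lies precisely in the cancellation hidden in the closure step: every in-edge absorbed by a sink in $T_-$ is simultaneously an out-edge of a source in $T_+$, so a naive tally of ``in minus out'' over $A_T$ collapses onto the boundary edges and does not, on its own, register the gap promised by $|T_-|>|T_+|$. The delicate point is therefore to route the inequality through the per-vertex degree bounds of Claim \ref{X empty} and through the cross-colour overlaps, so that the strict inequality $|T_-|>|T_+|$ is genuinely converted into a strict sign for the global sum rather than being swallowed by the internal cancellation.
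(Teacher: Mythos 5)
You have correctly set up the frame --- the sum over $G_-\cup G_+$ telescopes onto the oriented edges crossing the boundary of that set, and the argument must convert criterion (3) of Definition \ref{boundeddef} together with the out-degree guarantees of Claim \ref{X empty} into a strict boundary inequality --- but the proposal stops exactly where the proof has to begin. Your final paragraph names the fatal cancellation (every in-edge of a sink in $T_-$ is an out-edge of a source in $T_+$, so per-part tallies collapse onto boundary edges) and then merely defers its resolution (``route the inequality through the per-vertex degree bounds'') without constructing it. The paper's count rests on three ideas absent from your outline: (i) split off $G'_-:=G_-\setminus G_+$ and $G'_+:=G_+\setminus G_-$ and show, using the two-colour structure, that every oriented edge entering $G_-\cup G_+$ from outside lands on a vertex of $G'_-\cup G'_+$, with each such vertex absorbing at most one external in-edge, giving $\indeg(G_-\cup G_+)\le|G'_-|+|G'_+|$; (ii) convert criterion (3) into $|G'_+|<\sum_{T}|T_-\cap G'_-|$ and $|G'_-|\le\sum_{T}|T_-\cap G'_-|$, using that no vertex of $G_+$ lies in two distinct sets $T_-$; (iii) lower-bound the edges leaving $G_-\cup G_+$ by $2\sum_{T}|T_-\cap G'_-|-k$, where $k$ counts edges from $G'_-$ back into $G_-\cup G_+$, and show that the $k$ targets of those edges receive no external in-edges, so that $k$ may be subtracted from the in-bound as well. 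The vertices of $G_-\cap G_+$, which you wave off as ``only reinforcing the surplus,'' are precisely what (i) and (ii) are built to control.

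Moreover, your intended direction is backwards. You propose an in-surplus --- the in-degrees forced at the numerous sinks outweighing the out-degrees of the fewer sources --- but, as your own obstacle paragraph observes, the sinks' in-edges are all internal to their parts and cancel exactly, so they can never produce a surplus. What the counting actually delivers is the opposite strict inequality, $\indeg(G_-\cup G_+)<\outdeg(G_-\cup G_+)$: the surplus sits on the out-side, coming from the at-least-two (respectively at-least-four) global out-edges that Claim \ref{X empty} guarantees at each vertex of $T_-\cap G'_-$, weighed against the scarce external in-edges. (The word ``positive'' in the statements of both claims is a sign slip in the paper itself: the proof of Claim \ref{X empty} establishes $\outdeg(v)>\indeg(v)$ on $X$, and the proof of Claim \ref{G-+ empty} ends with $\indeg<\outdeg$; the slip is harmless because both partial sums are shown to carry the \emph{same} strict sign, which is all the identity $0=\sum_{v\in G_-\cup G_+}(\indeg(v)-\outdeg(v))+\sum_{v\in X}(\indeg(v)-\outdeg(v))$ needs to force both sets empty. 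Your version, with a genuinely positive sum over $G_-\cup G_+$ set against a genuinely negative sum over $X$, would not force emptiness at all, since the two contributions could simply cancel.)
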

\begin{proof}
Assume $G_- \cup G_+$ is nonempty.
Since $T_- =\varnothing$ iff $T_+=\varnothing$ for all $T \in \tT$, this implies that $G_-$ and $G_+$ are both nonempty.
Let $e=(u,v)$ be an oriented (from $u$ to $v$) edge in $G$ with $v \in G_- \cup G_+$ and $u \not \in G_- \cup G_+$.
Suppose $e$ is colored red (blue).
Since $v \in G_- \cup G_+$, there must be at least one $T$ such that $v \in T_-$ or $v \in T_+$.
But observe that $v$ cannot be in $T_-$ or $T_+$ for any red (blue) $T \in \tT$: otherwise $u$ would be in $T_+$, and thus in $G_+$.
In particular, $v$ cannot be in both a $T_{1-}$ and a $T_{2+}$ for any $T_1,T_2 \in \tT$, since $T_1$ and $T_2$ would have to be of different colors. 
So $v \not \in G_- \cap G_+$.
With this motivation, define $G'_- := G_- \setminus G_+$ and $G'_+ := G_+ \setminus G_-$.
The above argument shows that
\begin{equation}
\label{star}
\indeg(G_- \cup G_+) \leq |G'_- \cup G'_+| = |G'_-| + |G'_+|
\tag{$\star$}
\end{equation}
Since no vertex $v \in G_+$ is in $T_{1-}$ and $T_{2-}$ for distinct $T_1,T_2 \in \tT$,
\[ |G'_+| = |G_+|-|G_+\cap G_-| = |G_+| - \sum_{T \in \tT}|G_+ \cap T_-| \leq \sum_{T \in \tT} |T_+| - \sum_{T \in \tT}|G_+ \cap T_-| \]
By criterion (3), $|T_+| < |T_-|$ if $T_-$ and $T_+$ are nonempty, for all $T \in \tT$ (and by assumption, some there is at least one such $T$), so
\[ |G'_+| <  \sum_{T \in \tT} |T_-| - \sum_{T \in \tT}|G_+ \cap T_-| \]
For any $T \in \tT$, the vertices in $T_-$ but not in $G_+$ are exactly those in $T_-$ and $G'_-$.
Hence
\[ |G'_+| < \sum_{T \in \tT}|T_-\cap G'_-| \]
It is easy to see that this bound works for $|G'_-|$ too: $|G'_-| \leq \sum_{T \in \tT}|T_-\cap G'_-|$.
Putting these together with (\ref{star}), we have
\[ \indeg(G_- \cup G_+) < 2\sum_{T \in \tT}|T_- \cap G'_-| \]

Every vertex in $G'_-$ has at least two outgoing edges, by Claim \ref{X empty}.
Those which are double-counted by $\sum_{T \in \tT}|T_- \cap G'_-|$ are exactly those which are in $T_{1-}$ and $T_{2-}$ for distinct $T_1,T_2 \in \tT$.
By Claim \ref{X empty}, these have at least four outgoing edges.
If all of these outgoing edges went out of $G_+ \cup G_-$, we would be done; however, some of the edges may go to vertices in $G_+\cup G_-$.
Let $k$ be the number of these edges from $G'_-$ to $G_+ \cup G_-$.
So we have $2\left(\sum_{T \in \tT}|T_- \cap G'_-|\right) -k \leq \outdeg(G_- \cup G_+)$.
Let $(u,v)$ be one of these edges, with $u \in G'_-$ and $v \in G_-\cup G_+$.
We show that $v$ can have no incoming edges $(w,v)$ with $w \not \in G_- \cup G_+$.
Suppose $(u,v)$ is red.
There is some $T$ such that $v$ is in $T_-$ or $T_+$.
If $(w,v)$ is blue, then either $u$ or $w$ is in $T_+$ (depending on the color of $T$).
If $(w,v)$ is red, then $v$ has two incoming edges, so there is some red $T'$ (maybe $T$) such that $v \in T'_-$.
But then $u,w \in T'_+$, a contradiction.
Similarly, if $(u,v)$ is blue, $v$ can have no other incoming edges.
Therefore
\[ \indeg(G_- \cup G_+) < 2\left(\sum_{T \in \tT}|T_- \cap G'_-|\right) -k \leq \outdeg(G_- \cup G_+) \qedhere\]
\end{proof}
Therefore $G_-$, $G_+$, and $X$ are all empty, so $G$ is made up entirely of main parts.
\end{proof}

\subsection{$P_6$ and $P_7$ are $2$-good}
\begin{definition}
A {\em pendant star} in a graph $G$ is a star $S$ where exactly one vertex in the star has an edge which is not part of the star, and that vertex is one of leaves of $S$.
A {\em pendant triangle} in a graph $G$ is a triangle where exactly one vertex in the triangle has an edge which is not part of the triangle.

The {\em standard orientation} of a pendant star or triangle is as follows.
Let $v$ be the vertex of the pendant subgraph which is connected to the rest of the graph.
\begin{itemize}
\item Orient all edges of a pendant star away from $v$, and
\item in a pendant triangle, orient the two edges adjacent to $v$ away from $v$, and leave the third unoriented.
\end{itemize}
\end{definition}

\begin{theorem}
\label{P6}
$P_6$ is $2$-good.
\end{theorem}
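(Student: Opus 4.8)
The plan is to mirror the proof that $P_5$ is $2$-good: compute the Ramsey number, invoke Theorem \ref{maintheorem}, and exhibit a bounded partial orientation of every $P_6$-free graph. First I would record that $R_2(P_6)=R(P_6,P_6)=6+\lfloor 6/2\rfloor-1=8$ \cite{GeGy}, so Theorem \ref{maintheorem} reduces the statement to showing that the family of $P_6$-free graphs is $7$-bounded; since the monochromatic parts of a counterexample are connected $P_6$-free graphs, it suffices to give each such graph an $(7,s,t)$-bounded partial orientation. The parameters are essentially forced: with $n=7$ the constraint $n>2s+2$ gives $s\le 2$, and I will see below that vertices of in-degree $2$ are unavoidable, which forces $s\ge 2$; meanwhile the $P_6$-free graphs $K_5$ and $K_{2,4}$ have maximum degree $4$ with no orientable structure, forcing $t-1\ge 4$, while $n>s+t-1$ forces $s+t\le 7$. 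Hence I would aim for the \emph{unique} admissible choice $s=2$, $t=5$.

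For the orientation itself I would combine two rules. On the sparse boundary I apply the standard orientation of every pendant star and pendant triangle defined just above; for the friendship graph (triangles sharing a hub) this already orients everything except the triangle bases, and the hub and leaves are readily checked against conditions (1) and (2) of Definition \ref{boundeddef}. The genuinely new phenomenon, absent for $P_5$, is the family $K_{2,m}$: it is $P_6$-free (any path alternates between the two hubs and the spokes, so it has at most five vertices), it contains no pendant star or triangle, and yet it has unbounded degree and order. Here I would orient every edge away from the two high-degree hubs, so each spoke acquires in-degree $2$ and the hubs acquire only out-edges. Thus the general recipe is: apply standard orientations to pendant stars and triangles, and orient away from the remaining vertices of large degree.

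This is exactly the setting that Lemma \ref{technicallemma} was built for: unlike the $P_5$ argument, $T_-$ and $T_+$ are now genuinely nonempty (the spokes lie in $T_-$, the hubs in $T_+$), so condition (3) carries real content. Conditions (1) and (2) I would verify locally. A spoke has both edges oriented inward, so $d_T(v)+d_T^-(v)+\min\{1,d_T^+(v)\}=0+2+0=2=s$; a hub or star centre has all edges oriented outward, so $d_T(v)+\min\{1,d_T^+(v)+d_T^-(v)\}=0+1\le t-1$; and a low-degree vertex untouched by the orientation retains unoriented degree at most $t-1=4$. Condition (4) requires a structural claim: a connected $P_6$-free graph with no oriented edge — equivalently, with no pendant star, no pendant triangle, and no vertex of large degree — has at most $7$ vertices. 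I would prove this by classifying the small (near-)$2$-connected $P_6$-free graphs of bounded maximum degree, such as $C_3,C_4,C_5,K_4,K_5$, and $K_{2,4}$, all of which have at most $7$ vertices.

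The main obstacle is condition (3), the inequality $|T_-|>|T_+|$. Establishing it amounts to showing that globally the high-degree hubs swept into $T_+$ are always strictly outnumbered by the in-degree-$\ge 2$ vertices they feed, which populate $T_-$. For $K_{2,m}$ this is the comfortable bound $m>2$, but in a general connected $P_6$-free graph several hubs may share spokes and interact with the pendant structures, so the counting is delicate. Making it robust demands a genuine structural description of connected $P_6$-free graphs — in effect, a proof that every such graph is assembled from a bounded dense core by attaching pendant stars, pendant triangles, and spokes to a small set of hubs — and I expect the bulk of the work, and the real difficulty, to be in pinning down that structure precisely enough to guarantee $|T_-|>|T_+|$ and the degree bounds of conditions (1) and (2) at the same time.
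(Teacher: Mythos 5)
Your framework coincides with the paper's: $R_2(P_6)=8$, reduction via Theorem \ref{maintheorem} to showing the $P_6$-free graphs are $7$-bounded, the parameter choice $(7,2,5)$, the standard orientation of pendant stars and triangles, and the treatment of the $K_{2,m}$-like configurations by orienting away from the two hubs so that the spokes populate $T_-$ and the hubs $T_+$. But the proposal has a genuine gap, which you yourself flag in the final paragraph: the rule ``orient away from the remaining vertices of large degree'' is not a well-defined orientation, and conditions (2), (3), and (4) of Definition \ref{boundeddef} are never actually verified outside the clean example $K_{2,m}$. You defer precisely the part that constitutes the proof --- a structural description of connected $P_6$-free graphs sharp enough to rule out hubs sharing spokes or interacting with pendant structures --- and without it the claim $|T_-|>|T_+|$ is unestablished. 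A plan that ends with ``the bulk of the work is in pinning down that structure'' is a plan, not a proof.

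The paper closes exactly this hole by organizing the case analysis around a longest cycle, which makes $P_6$-free graphs far more rigid than your worry suggests. If a monochromatic part $T$ contains a $5$-cycle, then $T$ has no further vertices at all (any attachment yields a $P_6$), so it is an unoriented main part. If the longest cycle is $C_4=abcd$, then every extra vertex of degree exceeding one lies on a two-edge $a$--$c$ path (or $b$--$d$, but not both); these vertices $v_1,\dots,v_m$ can carry no other edges (an edge $\{v_i,v_j\}$ creates a $C_5$, any other edge a $P_6$), and all remaining edges are pendant edges at $a$ or $c$. Orienting everything away from $a$ and $c$, leaving $\{a,c\}$ unoriented, gives $T_+=\{a,c\}$ and $T_-=\{v_1,\dots,v_m\}$ with $m\ge 3$, so condition (3) holds for free --- the delicate global counting you anticipate simply cannot arise. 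If the longest cycle is a triangle, $T$ is the triangle with pendant edges, stars, and triangles attached (standardly oriented), and trees are oriented away from an arbitrary root; in all these cases $T_-=T_+=\varnothing$ and conditions (1) and (2) are local checks, while (4) holds because any component on more than $7$ vertices necessarily contains oriented pendant structure or spokes. Your parameter-forcing discussion and the $K_{2,m}$ computation are correct and match the paper, but to complete the argument you would need this longest-cycle classification (or an equivalent structure theorem), and it is the one idea your proposal is missing.
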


\begin{proof}
The $2$-color Ramsey number of $P_6$ is $R_2(P_6) = 8$ \cite{GeGy}.
To show that the family of $P_6$-free graphs is $7$-bounded, we construct a $(7, 2, 5)$-bounded partial orientation for each $P_6$-free graph.

In each monochromatic part, we will partially orient the edges by (usually) orienting away from a longest cycle.
Let $T$ be a monochromatic part of $G$.
Orient it as follows:

\begin{case}{1} Suppose $T$ has a $5$-cycle.
Then $T$ can have no vertices not in this cycle.
In this case, we orient none of the edges - $T$ is a main part.
To see that this is indeed a $(7,2,5)$-bounded orientation of $T$, note that (2) and (4) are true since $d(v) \leq 4$ for $v\in T$ and $|T|<7$.

\twographs{
\begin{graph}
\fivecycle{c}
\draw (c 1) -- (c 3);
\draw (c 2) -- (c 4);
\end{graph}
}{If $T$ has a $5$-cycle, it is a main part.}{
\begin{graph}
\def \n {2cm}
\node [label={[label distance=1mm]above:$a$}] (a) at (0, 0) {};
\node [label={[label distance=1mm]above:$b$}] (b) at (\n,\n) {};
\node [label={[label distance=1mm]above:$c$}] (c) at (2*\n, 0) {};
\node [label={[label distance=1mm]below:$d$}] (d) at (\n,-\n) {};
\draw[arrow] (a) -- (b);
\draw[arrow] (a) -- (d);
\draw[arrow] (c) -- (b);
\draw[arrow] (c) -- (d);
\draw (a) -- (c);

\draw (\n,1.5) node(v2) {};
\draw (\n,1.0) node(v3) {};
\draw (\n,0.5) node(v4) {};
\draw (\n,-1.5) node(v5) {};
\foreach \s in {2,...,5}
{
    \draw[arrow] (a) -- (v\s); \draw[arrow] (c) -- (v\s);
}

\foreach \s in {1,...,5}
{
    \node[draw] (a \s) at ({360/10 * (\s + 3)}:\n/2) {};
    \draw[arrow] (a) -- (a \s); 
}

\draw (5*\n/2,0) node(c 1) {};
\draw (5*\n/2,\n/2) node(c 2) {};
\draw (5*\n/2,-\n/2) node(c 3) {};
\foreach \s in {1,...,3}
{
    \draw[arrow] (c) -- (c \s); 
}
\end{graph}
}{If there are at least three $2$-edge $a-c$ paths, then ${T_+ = \{a,c\}}$ and $|T_-| \geq 3$.}
\end{case}

\begin{case}{2} Suppose a longest cycle $C$ in $T$ is a $4$-cycle.
Let $a$, $b$, $c$, $d$ be the vertices of $C$, in that order.
Consider one of the vertices of $C$; without loss of generality, assume it's $a$.
Then there can be no path from $a$ to $b$ or from $a$ to $d$ without some edge in $C$ (if there is, then there is a larger cycle than $C$).
Further, if there is a path from $a$ to $c$ without using an edge in $C$, it must have $1$ or $2$ edges, for the same reason.
A one-edge path from $a$ to $c$ is just the edge $\{a, c\}$, which there can only be one of.
If there is a two-edge $a-c$ path without using edges in $C$, note that there can be no two-edge $b-d$ path without using edges in $C$.
Now we consider two cases:
\begin{itemize}
\item Suppose there is a two-edge $a-c$ or $b-d$ path which avoids edges in $C$; without loss of generality, assume it is an $a-c$ path.
There may be many such paths - let $b = v_1, v_2, \dots, v_m = d$ be the vertices such that $\{a, v_i\}$ and $\{v_i, c\}$ are edges, for $i = 1, \dots, m$.

Note that $m \geq 3$.
None of the vertices $v_1, \dots, v_m$ can have any other edges: if there is some $\{v_i, v_j\}$ edge, then it makes a $5$-cycle, and if one of $v_1, \dots, v_m$ has any other edge, it makes a $P_6$.
Note also that all of the edges from $a$ or $c$ which are not of the form $\{a, v_i\}$ or $\{c, v_i\}$ are pendant edges.
So all of the edges are adjacent to either $a$ or $c$; orient those adjacent to $a$ away from $a$, and orient those adjacent to $c$ away from $c$.
The one possible exception is the edge $\{a, c\}$: leave this one unoriented.
Since $m \geq 3$, this satisfies criterion (3): $T_+ = \{a,c\}$ and $|T_-| =m$.

\item Suppose that there is no two-edge $a-c$ or $b-d$ path avoiding $C$.
Note that each edge which has an endpoint which is not one of $a$, $b$, $c$, or $d$ is a pendant edge.
Orient the pendant edges with the standard orientation.

\begin{myfigure}
\begin{graph}
\def \n {1.5cm}
\node [label={[label distance=1mm]above:$a$}] (a) at (0, 0) {};
\node [label={[label distance=1mm]above:$b$}] (b) at (\n,\n) {};
\node [label={[label distance=1mm]above:$c$}] (c) at (2*\n, 0) {};
\node [label={[label distance=1mm]below:$d$}] (d) at (\n,-\n) {};
\draw (a) -- (b);
\draw (a) -- (d);
\draw (c) -- (b);
\draw (c) -- (d);
\draw (a) -- (c);

\foreach \s in {1,...,5}
{
    \node[draw] (a \s) at ({360/10 * (\s + 3)}:3*\n/4) {};
    \draw[arrow] (a) -- (a \s); 

}

\draw (23*\n/8,0) node(c 1) {};
\draw (11*\n/4,\n/2) node(c 2) {};
\draw (11*\n/4,-\n/2) node(c 3) {};
\foreach \s in {1,...,3}
{
    \draw[arrow] (c) -- (c \s); 
}
\end{graph}
\hspace{1cm}
\begin{graph}
\def \n {1.5cm}
\node [label={[label distance=1mm]above:$a$}] (a) at (0, 0) {};
\node [label={[label distance=1mm]above:$b$}] (b) at (\n,\n) {};
\node [label={[label distance=1mm]right:$c$}] (c) at (2*\n, 0) {};
\node [label={[label distance=1mm]below:$d$}] (d) at (\n,-\n) {};
\draw (a) -- (b);
\draw (a) -- (d);
\draw (c) -- (b);
\draw (c) -- (d);
\draw (a) -- (c);
\draw (b) -- (d);

\foreach \s in {1,...,5}
{
    \node[draw] (a \s) at ({360/10 * (\s + 3)}:3*\n/4) {};
    \draw[arrow] (a) -- (a \s); 

}

\end{graph}
\caption{If there are only two $2$-edge $a-c$ paths, just orient the pendant edges.}
\end{myfigure}
\end{itemize}

In both cases, condition (2) holds, and the others hold trivially.
So we have a $(7,2,5)$-bounded orientation of $T$.
\end{case}

\begin{case}{3} Suppose a longest cycle $C$ in $T$ is a $3$-cycle.
Then there can be no path between two vertices of $C$ that uses some edges not in $C$, since such a path would imply the existence of a larger cycle.
For any vertex $v$ of $C$, any path from $v$ which does not use edges in $C$ can have at most two edges.
So each vertex of $C$ can have only pendant edges, pendant stars, or pendant triangles.
Orient these according to the standard orientation.

\begin{myfigure}
\begin{graph}
\def \n {1.5cm}
\draw (0,0) node(a) {};
\foreach \s in {1,...,8}
{
    \node[draw] (a \s) at ({360/14 * (\s-1)}:7*\n/8) {};
    \draw[arrow] (a) -- (a \s); 
}
\draw (a 1) -- (a 2);
\draw (a 3) -- (a 4);
\draw (a 5) -- (a 6);
\draw (a 7) -- (a 8);

\foreach \s in {7,...,9}
{
    \node[draw] (b \s) at ({360/12 * (\s + 1)}:3*\n/4) {};
    \draw[arrow] (a) -- (b \s); 
}
\foreach \s in {-1,...,1}
{
    \node[draw] (v \s) at (\s*\n/2, -3*\n/2) {};
    \draw[arrow] (b 8) -- (v \s); 
}
\end{graph}
\hspace{1.5cm}
\begin{graph}
\def \n {1.5cm}
\draw (0,0) node(a) {};
\draw (1.22*\n*2/3, -0.71*\n*2/3) node(b) {};
\draw (1.22*\n*2/3, 0.71*\n*2/3) node(c) {};
\draw (a) -- (b);
\draw (c) -- (b);
\draw (a) -- (c);

\foreach \s in {1,...,4}
{
    \node[draw] (a \s) at ({360/11 * (\s + 3)}:3*\n/4) {};
    \draw[arrow] (a) -- (a \s); 
}

\node[draw] (b 1) at (1.2*\n, -0.9*\n) {};
\node[draw] (b 2) at (0.9*\n, -1*\n) {};
\node[draw] (b 3) at (0.6*\n, -0.9*\n) {};

\node[draw] (c 1) at (1.2*\n, 0.9*\n) {};
\node[draw] (c 2) at (0.9*\n, 1*\n) {};
\node[draw] (c 3) at (0.6*\n, 0.9*\n) {};

\foreach \s in {1,...,3}
{
    \draw[arrow] (b) -- (b \s); 
    \draw[arrow] (c) -- (c \s); 
}
\end{graph}
\caption{If the longest cycle in $T$ is a $3$-cycle, orient pendant stars, triangles, and edges outward.}
\label{P_6 3cycle}
\end{myfigure}
\end{case}

\begin{myfigure}
\begin{graph}
\def \n {1cm}
\node [label={[label distance=1mm]left:$r$}] (s) at (2/3*\n,2*\n) {};
\draw (2/3*\n,\n) node(t 1) {};
\draw[arrow] (s) -- (t 1);

\draw (0,0) node(a) {};
\draw (4/3*\n,0) node(t 3) {};
\draw[arrow] (t 1) -- (a);
\draw[arrow] (t 1) -- (t 3);

\foreach \s in {1,...,4}
{
    \node[draw] (a \s) at ({360/11 * (\s + 5)}:\n) {};
    \draw[arrow] (a) -- (a \s); 
}

\foreach \s in {-1,...,1}
{
    \node[draw] (v \s) at (\s*\n/2-0.2*\n, -2*\n) {};
    \draw[arrow] (a 3) -- (v \s); 
}

\end{graph}
\caption{Orient trees away from some arbitrary root.} 
\label{P_6 tree}
\end{myfigure}

\begin{case}{4} Suppose $T$ has no cycles.
Then choose a root vertex $r$ arbitrarily, and orient every edge away from $r$.
\end{case}

In cases 3 and 4, one can easily check properties (1) and (2) ((3) and (4) are trivial). 
Note that (1) holds with equality at many vertices on Figures \ref{P_6 3cycle} and \ref{P_6 tree}.
\end{proof}

\begin{theorem}
\label{P7}
$P_7$ is $2$-good.
\end{theorem}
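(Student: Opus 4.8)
The plan is to invoke Theorem \ref{maintheorem}: since the Gerencs\'er--Gy\'arf\'as formula gives $R_2(P_7)=R(P_7,P_7)=7+\lfloor 7/2\rfloor-1=9$ \cite{GeGy}, it suffices to exhibit, for every $P_7$-free graph, an $8$-bounded partial orientation in the sense of Definition \ref{boundeddef}. I would aim for $(8,2,6)$-bounded orientations, which are essentially the smallest parameters compatible with the problem: the inequalities $n>s+t-1$ and $n>2s+2$ become $8>7$ and $8>6$, while $t=6$ is forced because a main part may be a chorded $C_6$ (for instance $K_6$) with a vertex of degree $5$, which meets condition (2) with equality, and $s=2$ is forced because the orientation must accommodate both depth-$3$ directed chains and in-degree-$2$ sinks. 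As in the proof of Theorem \ref{P6}, I would orient each monochromatic part $T$ by pointing edges ``outward'' from a longest cycle, so the case analysis runs over the length $\ell$ of a longest cycle of $T$; since $C_7$ contains $P_7$, we have $\ell\le 6$, or else $T$ is a forest.

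The structural fact I would establish first is that if $T$ is $P_7$-free with a longest cycle of length $\ell$, then every branch hanging off a cycle vertex has depth at most $6-\ell$, since a depth-$d$ branch together with the $\ell$ cycle vertices yields a path on $d+\ell$ vertices. This immediately settles the extreme cases. For $\ell=6$ no branch and no external vertex can exist, so $T$ is a $6$-vertex chorded cycle: a main part of maximum degree $5$, meeting (2). When $T$ is a forest I orient every edge away from an arbitrary root, so each non-root vertex has in-degree exactly $1$; then $T_-=\varnothing$, (3) and (4) are trivial, and only the easy degree bounds (1),(2) remain. For $\ell=3$ the depth bound permits branches of depth up to $3$, but because $C$ is longest no vertex is adjacent to two triangle vertices and no branch can re-converge without creating a cycle of length $\ge 4$; hence every branch is a tree or a pendant triangle, the standard orientation outward keeps all in-degrees at most $1$, and again $T_-=\varnothing$, leaving only (1),(2) to check.

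The substance of the argument, and its main obstacle, lies in the cases $\ell=4$ and $\ell=5$, where the diagonal (distance-$2$) \emph{connector} vertices reappear exactly as in Case 2 of Theorem \ref{P6} and furnish the in-degree-$2$ sinks that populate $T_-$. The plan is to orient all connector edges toward the connectors, so that $T_-$ is the set of connectors and $T_+$ is the set of feeding cycle vertices, and then to verify $|T_-|>|T_+|$ through criterion (3). The genuine difficulty is that, unlike for $P_6$, the extra vertex budget in a $P_7$ lets a connector in the $\ell=4$ case carry its own pendant branch, which is incompatible with being a sink under the tight bound $s=2$: a sink must satisfy $d_T^-=2$, $d_T^+=0$, $d_T=0$. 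I would control this by first proving the sharper restrictions that $P_7$-freeness imposes here---for $\ell=4$, at most one connector may carry a branch and that branch has depth $1$---and then, in the few low-multiplicity configurations where turning connectors into sinks would violate (3), instead absorbing the offending connectors into an unoriented core, checking that the surviving unoriented degrees still obey (2) and that (4) holds whenever $|T|>8$.

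In short, the proof is a longer-but-parallel companion to Theorem \ref{P6}: a depth-versus-cycle-length dichotomy disposes of $\ell\in\{6,3\}$ and the forest case almost for free, and the whole weight of the argument is the simultaneous reconciliation of the counting condition (3) with the very tight degree condition (1) in the $\ell=4$ case. I expect that reconciliation to be the hard part; once the admissible connector configurations are fully enumerated, the remaining verifications of (1) and (2) are routine.
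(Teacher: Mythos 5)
Your skeleton is exactly the paper's: invoke Theorem \ref{maintheorem} with $R_2(P_7)=9$ and construct $(8,2,6)$-bounded orientations by cases on the longest cycle length $\ell$, orienting outward from the cycle so that the distance-two connectors become the in-degree-two vertices of $T_-$ fed by $a$ and $c$ in $T_+$ (you have $T_-$ and $T_+$ the right way around; the ``$T_-=\{a,c\}$'' printed in the paper's Case 3 is a slip relative to Definition \ref{boundeddef}). Your parameter arithmetic, the disposal of $\ell=6$, $\ell=3$, and forests, and your structural claims all check out. In particular, your flag on the $\ell=4$ pendant-bearing connector is a genuine catch, not a phantom obstacle: $P_7$-freeness really does permit one connector $v_i$ to carry a pendant edge $x$ when $m\ge 3$ (the path $x,v_i,a,v_k,c,v_j$ is only a $P_6$), the paper's Case 3 explicitly admits this configuration, and under the paper's literal prescription (all edges $\{a,v_i\},\{c,v_i\}$ oriented inward, pendant edges standard) that connector has $d_T(v_i)=0$, $d_T^-(v_i)=2$, $d_T^+(v_i)=1$, so condition (1) reads $3>s=2$. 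Your repair --- refuse to make that connector a sink, leaving its two connector edges unoriented --- can be verified to work: (3) survives for $m\ge 4$ since $|T_-|=m-1>2=|T_+|$, and for $m=3$ one may leave all connector edges unoriented, which passes (2) because then $d_T(a)\le 4$ and $4+1\le t-1=5$.

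The genuine gap is that the crux you correctly isolate is deferred rather than executed, and the one concrete device you propose --- ``absorbing the offending connectors into an unoriented core'' --- provably fails in a configuration you must face: $\ell=5$ with exactly $m=2$ connectors. There pendant edges can sit (only) at $a$ and $c$ (a pendant $y$ at $d$ would give the $P_7$ on $y,d,e,a,v_2,c,b$), while chords may make $a$ adjacent to all of $b=v_1$, $v_2$, $c$, $d$, $e$; absorbing both connectors then leaves $d_T(a)=5$ with $d_T^+(a)>0$ from the pendants, so condition (2) gives $5+1=6>t-1=5$. The paper records precisely this obstruction and resolves it with an asymmetric trick absent from your plan: orient only $\{a,v_1\}$ away from $a$ and $\{c,v_2\}$ away from $c$, leaving $\{a,v_2\}$ and $\{c,v_1\}$ unoriented, so each connector has in-degree one (hence $T_-=\varnothing$ and (3) is vacuous) and the unoriented degree of $a$ and $c$ drops to at most $4$. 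Your ``check that (2) still holds'' honestly signals the danger but supplies no fallback when the check fails, and here it does fail; until this $m=2$ subcase and the promised enumeration for $\ell=4$ are written out, the proposal is a faithful plan of the paper's proof rather than a proof.
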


\begin{proof}
The $2$-color Ramsey number of $P_7$ is $R_2(P_7) = 9$ \cite{GeGy}.
With a natural extension of the partial orientations from the previous proof, we can give each $P_7$-free graph an $(8,2,6)$-bounded orientation.
Let $T$ be a monochromatic part of $G$, and orient it as follows:

\begin{case}{1} If $T$ has a $6$-cycle, then every vertex of $T$ is in that cycle.
Leave all the edges unoriented; $T$ is a main part.

\twographs{
\begin{graph}
\sixcycle{c}
\draw (c 1) -- (c 3);
\draw (c 2) -- (c 4);
\draw (c 3) -- (c 5);
\draw (c 2) -- (c 5);
\end{graph}
}{If $T$ has a $6$-cycle, it is a main part.}{
\begin{graph}
\def \n {1.5cm}
\fivecycle{a}

\draw (a 1) -- (a 3);
\draw (a 2) -- (a 4);
\draw (a 4) -- (a 1);

\draw (\n/2,\n/5) node(c 1) {};
\draw (3*\n/5,0) node(c 2) {};
\draw (\n/2,-\n/5) node(c 3) {};
\foreach \s in {1,...,3}
{
    \draw[arrow] (a 1) -- (c \s); 
}

\draw (-4*\n/7,-\n/5) node(b 1) {};
\draw (-\n/2,-\n/5*2) node(b 2) {};
\draw (-\n/3,-\n/2) node(b 3) {};
\foreach \s in {1,...,3}
{
    \draw[arrow] (a 4) -- (b \s); 
}
\end{graph}
}{When $T$ has no extra two-edge paths, orient only the pendant edges.}
\end{case}

\begin{case}{2} Suppose a longest cycle $C$ in $T$ is a $5$-cycle, with vertices $a$, $b$, $c$, $d$, $e$, in that order.
Let $v\not=a,b,c,d,e$ be a vertex of $T$ such that $d_T(v) > 1$.
Without loss of generality, we may assume that for all such vertices, $\{a,v\}$ and $\{v,c\}$ are edges in $T$.
Let $m$ be the number of vertices (including $b$, but not $d$ or $e$) such that $\{a,v\}$ and $\{v,c\}$ are edges in $T$.
And call these vertices $b=v_1, \dots, v_m$.
It is easy to check that there can be no edge $\{v_i,v_j\}$ for any distinct $i,j$.
\begin{itemize}
\item 
If $m = 1$, then every edge with an endpoint not in the cycle $C$ is a pendant edge.
Orient the pendant edges away from the cycle, and leave the rest unoriented.

\item $m=2$ is a special case.
We would like to orient towards $v_1$ and $v_2$, but criterion (3) would not be satisfied, since we would have $|T_-|=|T_+|=2$.
And we can't just orient the pendant edges, since then $a$ or $c$ could have unoriented degree $5$ and positive out-degree, violating criterion (2).
Instead, orient just $\{a,v_1\}$ and $\{v_2,c\}$ away from $a$ and $c$.
Orient the pendant edges with the standard orientation.
Leave the any other edges between two vertices of the cycle unoriented (if they exist).

\twographs{
\begin{graph}
\def \n {2cm}
\node [label={[label distance=1mm]left:$a$}] (a) at (0, 0) {};
\node [label={[label distance=-3mm]above:$b=v_1$}] (b) at (\n,\n) {};
\node [label={[label distance=1mm]right:$c$}] (c) at (2*\n,0) {};
\node [label={[label distance=1mm]36*8:$d$}] (d) at (3/2*\n,-\n) {};
\node [label={[label distance=1mm]36*7:$e$}] (e) at (1/2*\n,-\n) {};
\draw[very thick,arrow] (a) -- (b);
\draw (c) -- (b);

\draw (a) -- (c);
\draw (a) -- (d);
\draw (a) -- (e);
\draw (c) -- (d);
\draw (c) -- (e);
\draw (d) -- (e);

\node [label={[label distance=1mm]below:$v_2$}] (v2) at (\n,0.5*\n) {};
\draw (a) -- (v2); 
\draw[very thick, arrow] (c) -- (v2);

\foreach \s in {1,2,4,5}
{
    \node (a \s) at ({360/10 * (\s + 2)}:\n/2) {};
    \draw[arrow] (a) -- (a \s); 
}

\draw (2.4*\n,0.35*\n) node(c 1) {};
\draw (2.15*\n,0.47*\n) node(c 2) {};
\draw (2.25*\n,-0.44*\n) node(c 3) {};
\foreach \s in {1,...,3}
{
    \draw[arrow] (c) -- (c \s); 
}
\end{graph}
}{When $m=2$, orient just one edge towards each of $v_1$ and $v_2$.}{
\begin{graph}
\def \n {2cm}
\node [label={[label distance=2mm]left:$a$}] (a) at (0, 0) {};
\node [label={[label distance=-3mm]above:$b=v_1$}] (b) at (\n,\n) {};
\node [label={[label distance=1mm]right:$c$}] (c) at (2*\n,0) {};
\node [label={[label distance=1mm]36*8:$d$}] (d) at (3/2*\n,-\n) {};
\node [label={[label distance=1mm]36*7:$e$}] (e) at (1/2*\n,-\n) {};
\draw[arrow] (a) -- (b);
\draw[arrow] (c) -- (b);

\draw (a) -- (c);
\draw (a) -- (d);
\draw (a) -- (e);
\draw (c) -- (d);
\draw (c) -- (e);
\draw (d) -- (e);

\draw (\n,0.75*\n) node(v2) {};
\draw (\n,0.5*\n) node(v3) {};
\draw (\n,0.25*\n) node(v4) {};
\node [label={[label distance=0]below:$v_4$}] (v4) at (\n,0.25*\n) {};
\foreach \s in {2,...,4}
{
    \draw[arrow] (a) -- (v\s); \draw[arrow] (c) -- (v\s);
}

\foreach \s in {1,2,4,5}
{
    \node (a \s) at ({360/10 * (\s + 2)}:\n/2) {};
    \draw[arrow] (a) -- (a \s); 
}

\draw (2.4*\n,0.35*\n) node(c 1) {};
\draw (2.15*\n,0.47*\n) node(c 2) {};
\draw (2.25*\n,-0.44*\n) node(c 3) {};
\foreach \s in {1,...,3}
{
    \draw[arrow] (c) -- (c \s); 
}

\end{graph}
}{When $T$ has many $a-c$ paths, $T_-$ is $\{a,c\}$ and $|T_+| \geq 3$.}

\item 
Otherwise, $m \geq 3$.
Orient the edges $\{a, v_i\}$ away from $a$, for all $i=1, \dots, m$, orient the edges $\{c, v_i\}$ away from $c$ for all $i$.
Give pendant edges the standard orientation and leave other edges between vertices of the cycle unoriented.
We have $T_- = \{a,c\}$ and $T_+ = \{v_1, \dots, v_m\}$, so criterion (3) is satisfied.
\end{itemize}
\end{case}

\begin{myfigure}
\begin{graph}
\def \n {2cm}
\node [label={[label distance=1mm]above:$a$}] (a) at (0, 0) {};
\node [label={[label distance=1mm]above:$b$}] (b) at (\n,\n) {};
\node [label={[label distance=1mm]above:$c$}] (c) at (2*\n, 0) {};
\node [label={[label distance=1mm]below:$d$}] (d) at (\n,-\n) {};
\draw[arrow] (a) -- (b);
\draw[arrow] (a) -- (d);
\draw[arrow] (c) -- (b);
\draw[arrow] (c) -- (d);
\draw (a) -- (c);

\draw (\n,1.5) node(v2) {};
\draw (\n,1.0) node(v3) {};
\draw (\n,0.5) node(v4) {};
\draw (\n,-1.5) node(v5) {};
\foreach \s in {2,...,5}
{
    \draw[arrow] (a) -- (v\s); \draw[arrow] (c) -- (v\s);
}

\foreach \s in {1,...,5}
{
    \node (a \s) at ({360/10 * (\s + 3)}:\n/2) {};
    \draw[arrow] (a) -- (a \s); 
}
\draw (a 2) -- (a 3);
\draw (a 4) -- (a 5);

\node (a 6) at (-0.6*\n, 0.7*\n) {};
\node (a 7) at (-0.4*\n, 0.77*\n) {};
\node (a 8) at (-0.2*\n, 0.7*\n) {};
\node (a 9) at (-0.1*\n, 0.55*\n) {};
\foreach \s in {6,...,9}
{
    \draw[arrow] (a 1) -- (a \s); 
}

\draw (5*\n/2,0) node(c 1) {};
\draw (2.4*\n,0.4*\n) node(c 2) {};
\draw (2.3*\n,-\n/2) node(c 3) {};
\foreach \s in {1,...,3}
{
    \draw[arrow] (c) -- (c \s); 
}
\end{graph}
\hspace{1cm}
\begin{graph}
\def \n {2cm}
\node [label={[label distance=1mm]above:$a$}] (a) at (0, 0) {};
\node [label={[label distance=1mm]above:$b$}] (b) at (\n,\n) {};
\node [label={[label distance=1mm]above:$c$}] (c) at (2*\n, 0) {};
\node [label={[label distance=1mm]below:$d$}] (d) at (\n,-\n) {};
\draw[arrow] (a) -- (b);
\draw[arrow] (a) -- (d);
\draw[arrow] (c) -- (b);
\draw[arrow] (c) -- (d);
\draw (a) -- (c);

\draw (\n,0.33*\n) node(v2) {};
\draw (\n,-0.65*\n) node(v3) {};
\draw (\n,0.66*\n) node(v4) {};
\foreach \s in {2,...,4}
{
    \draw[arrow] (a) -- (v\s); \draw[arrow] (c) -- (v\s);
}

\node (b 8) at (0.83*\n, -0.30*\n) {};
\node (b 9) at (1.0*\n, -0.28*\n) {};
\node (b 10) at (1.17*\n, -0.30*\n) {};
\foreach \s in {8,...,10}
{
    \draw[arrow] (v3) -- (b \s);
}

\foreach \s in {1,...,5}
{
    \node[draw] (a \s) at ({360/10 * (\s + 3)}:\n/2) {};
    \draw[arrow] (a) -- (a \s); 
}

\draw (2.5*\n,0) node(c 1) {};
\draw (2.37*\n,0.4*\n) node(c 2) {};
\draw (2.4*\n,-0.3*\n) node(c 3) {};
\foreach \s in {1,...,3}
{
    \draw[arrow] (c) -- (c \s); 
}
\end{graph}
\caption{When the longest cycle is a $4$-cycle, use the orientation for the $P_6$-free case.}
\end{myfigure}
\begin{case}{3} Suppose a longest cycle $C$ in $T$ is a $4$-cycle.
This is exactly like the $4$-cycle case from the previous proof, except here one of $a$ or $c$ can have pendant triangles or pendant stars, or one of $v_1, \dots, v_m$ can have pendant edges.
Use the same partial orientation as before, and orient the pendant triangles, stars, and edges according to the standard orientation.
If there are at least three vertice $v_1, \dots, v_m$, then $T_- = \{a,c\}$ and $T_+ = \{v_1, \dots, v_m\}$.
Otherwise, $T_-$ and $T_+$ are empty.

\end{case}

\begin{case}{4} Suppose a longest cycle in $T$ is a $3$-cycle.
Then $T$ is made up entirely of pendant stars, pendant triangles, and pendant edges, except for at most one non-pendant star or edge.
Orient every pendant edge, triangle, and star according to the standard orientation, and leave the other (at most three) edges unoriented.

\begin{myfigure}
\begin{graph}
\def \n {2cm}
\draw (0,0) node(a) {};
\draw (\n/2,0.866*\n) node(b) {};
\draw (\n,0) node(c) {};
\draw (a) -- (b);
\draw (c) -- (b);
\draw (c) -- (a);

\foreach \s in {2,...,7}
{
    \node (a \s) at ({360/15 * (\s + 4)}:\n/2) {};
    \draw[arrow] (a) -- (a \s); 
}
\draw (a 2) -- (a 3);
\draw (a 4) -- (a 5);

\node (a 1) at ({360/15 * (4)}:\n/2) {};
\draw[arrow] (a) -- (a 1); 
\node (a 8) at (0, 0.85*\n) {};
\node (a 9) at (-0.2*\n, 0.8*\n) {};
\node (a 10) at (-0.35*\n, 0.65*\n) {};
\foreach \s in {8,...,10}
{
    \draw[arrow] (a 1) -- (a \s); 
}

\node (a 11) at ({360/15 * (13)}:\n/2) {};
\draw[arrow] (a) -- (a 11); 
\node (a 12) at (0.25*\n, -0.7*\n) {};
\node (a 13) at (0.45*\n, -0.7*\n) {};
\node (a 14) at (0.61*\n, -0.6*\n) {};
\node (a 15) at (0.67*\n, -0.43*\n) {};
\foreach \s in {12,...,15}
{
    \draw[arrow] (a 11) -- (a \s); 
}

\node (b 1) at (0.45*\n, 1.26*\n) {};
\node (b 2) at (0.63*\n, 1.23*\n) {};
\node (b 3) at (0.77*\n, 1.15*\n) {};
\foreach \s in {1,...,3}
{
    \draw[arrow] (b) -- (b \s); 
}

\node (c 1) at (1.23*\n, 0.27*\n) {};
\node (c 2) at (1.34*\n, 0.14*\n) {};
\node (c 3) at (1.36*\n, -0.05*\n) {};
\node (c 4) at (1.30*\n, -0.23*\n) {};
\foreach \s in {1,...,4}
{
    \draw[arrow] (c) -- (c \s); 
}
\end{graph}
\hspace{1.5cm}
\begin{graph}
\def \n {2cm}
\draw (0,0) node(a) {};
\draw (\n,0) node(b) {};
\draw (a) -- (b);

\foreach \s in {1,...,7}
{
    \node (a \s) at ({360/10 * (\s )}:\n/2) {};
    \draw[arrow] (a) -- (a \s); 
}
\draw (a 2) -- (a 3);
\draw (a 4) -- (a 5);

\node (a 8) at (0, -0.83*\n) {};
\node (a 9) at (-0.2*\n, -0.85*\n) {};
\node (a 10) at (-0.4*\n, -0.79*\n) {};
\foreach \s in {8,...,10}
{
    \draw[arrow] (a 7) -- (a \s); 
}

\node (a 11) at ({360/15 * (13)}:\n/2) {};
\draw[arrow] (a) -- (a 11); 
\node (a 12) at (0.25*\n, -0.7*\n) {};
\node (a 13) at (0.45*\n, -0.7*\n) {};
\node (a 14) at (0.61*\n, -0.6*\n) {};
\node (a 15) at (0.67*\n, -0.43*\n) {};
\foreach \s in {12,...,15}
{
    \draw[arrow] (a 11) -- (a \s); 
}

\node (b 2) at (0.7*\n, 0.3*\n) {};
\node (b 3) at (0.95*\n, 0.43*\n) {};
\node (b 4) at (1.2*\n, 0.4*\n) {};
\node (b 5) at (1.4*\n, 0.2*\n) {};
\node (b 6) at (1.4*\n, -0.07*\n) {};
\node (b 7) at (1.30*\n, -0.3*\n) {};
\node (b 8) at (1.0*\n, -0.4*\n) {};
\foreach \s in {2,...,8}
{
    \draw[arrow] (b) -- (b \s); 
}
\draw (b 4) -- (b 5); 
\draw (b 7) -- (b 8); 

\node (b 9) at (0.75*\n, 0.78*\n) {};
\node (b 10) at (0.95*\n, 0.83*\n) {};
\node (b 11) at (1.15*\n, 0.78*\n) {};
\foreach \s in {9,...,11}
{
    \draw[arrow] (b 3) -- (b \s); 
}

\end{graph}

\caption{If the longest cycle in $T$ is a triangle, $T$ looks like one of these graphs.}
\end{myfigure}
\end{case}

\begin{case}{5} If $T$ has no cycles, choose a root vertex arbitrarily and orient every edge away from the root.
\end{case}
\end{proof}

It appears that all $P_N$-free graphs will admit a suitable partial orientation.
For any $N$, consider a two-colored complete graph on $R_2(P_N)-1 = N + \lfloor\frac{N}{2}\rfloor -2$ vertices with no monochromatic $P_N$ (i.e. a graph which gives the lower bound for $R_2(P_N)$).
We can construct such a graph as follows: take a red $K_{N-1}$ and a blue $K_{\lfloor \frac{N}{2}\rfloor - 1}$.
Color all of the edges in between these two parts blue.
We give a partial orientation to this graph in general, with $n=N + \lfloor \frac{N}{2} \rfloor - 2$, $t = N-1$, and $s = \lfloor\frac{N}{2}\rfloor -1$: leave the edges in the two monochromatic subgraphs unoriented and orient the remaining blue edges away from the blue $K_{\lfloor\frac{N}{2}\rfloor}$.

\begin{myfigure}
\begin{graph}
\def \n {1.5cm}
\foreach \s in {1,...,9}
{
    \node (a\s) at (1.0*\s*\n,0) {};
}
\foreach \s in {1,...,8}
{
    \pgfmathsetmacro{\z}{1+\s}
    \draw[red] (a\s) -- (a\z);
}
\foreach \s in {1,...,7}
{
    \pgfmathsetmacro{\z}{2+\s}
    \foreach \t in {\z,...,9}
    {
        \path (a\s) edge [red,out=340,in=200] (a\t);
    }
}

\foreach \s in {1,...,4}
{
    \node (b\s) at (1.25*\n + 1.5*\s*\n,1.5*\n) {};
    \foreach \t in {1,...,9}
    {
        \draw[bluearrow] (b\s) -- (a\t);
    }
}
\foreach \s in {1,...,3}
{
    \pgfmathsetmacro{\z}{1+\s}
    \draw[blue] (b\s) -- (b\z);
}
\foreach \s in {1,...,2}
{
    \pgfmathsetmacro{\z}{2+\s}
    \foreach \t in {\z,...,4}
    {
        \path (b\s) edge [blue,out=10,in=170] (b\t);
    }
}

\end{graph}
\caption{This graph proves $R_2(P_{10}) \geq 14$.}
\end{myfigure}
This motivates another open question:

\begin{question}
For every $N$, is the family of $P_N$-free graphs $(R_2(P_N)-1)$-bounded? 
In particular, does every $P_N$-free graph have an $(n,s,t)$-bounded orientation, for ${n=N + \lfloor \frac{N}{2} \rfloor - 2}$, $s = \lfloor\frac{N}{2}\rfloor -1$, and $t = N-1$?
\end{question}

\end{document}